\newcommand{\Alt}{\mathrm{Alt}}
\newcommand{\comment}[1]{}
\newcommand{\rip}{(\cdot|\cdot)}
\newcommand{\lbr}{[\cdot,\cdot]}
\newcommand{\bi}{\mathbf{i}}
\begin{document}

\title[Orthogonal and unitary tensor decomposition]{Orthogonal and unitary tensor decomposition from an
algebraic perspective}

\author{Ada Boralevi}
\address{
Department of Mathematics and Computer Science\\
Technische Universiteit Eindhoven\\
P.O. Box 513, 5600 MB Eindhoven, The Netherlands}
\email{a.boralevi@tue.nl}
\author{Jan Draisma}
\address{
Department of Mathematics and Computer Science\\
Technische Universiteit Eindhoven\\
P.O. Box 513, 5600 MB Eindhoven, The Netherlands;\\
and VU Amsterdam, The Netherlands}
\email{j.draisma@tue.nl}
\author{Emil Horobe\c{t}}
\address{
Department of Mathematics and Computer Science\\
Technische Universiteit Eindhoven\\
P.O. Box 513, 5600 MB Eindhoven, The Netherlands}
\email{e.horobet@tue.nl}
\author{Elina Robeva}
\address{Department of Mathematics\\ 
University of California Berkeley\\ 
775 Evans Hall, Berkeley, CA 94720, USA}
\email{erobeva@berkeley.edu}
\maketitle

\begin{abstract}
While every matrix admits a singular value decomposition, in which the
terms are pairwise orthogonal in a strong sense, higher-order tensors
typically do not admit such an orthogonal decomposition. Those that
do have attracted attention from theoretical computer science and
scientific compu\-ting. We complement this existing body of literature with
an algebro-geometric analysis of the set of orthogonally decomposable
tensors.

More specifically, we prove that they
form a real-algebraic variety defined by polynomials of degree at most
four. The exact degrees, and the corresponding polynomials, are different
in each of three times two scenarios: ordinary, symmetric, or alternating
tensors; and real-orthogonal versus complex-unitary.  A key feature of
our approach is a surprising connection between orthogonally decomposable
tensors and semisimple algebras---associative in the ordinary
and symmetric settings and of compact Lie type in the alternating setting.
\end{abstract}

\tableofcontents

\vspace{-2cm}
\begin{figure}
\begin{center}
\includegraphics[width=.9\textwidth]{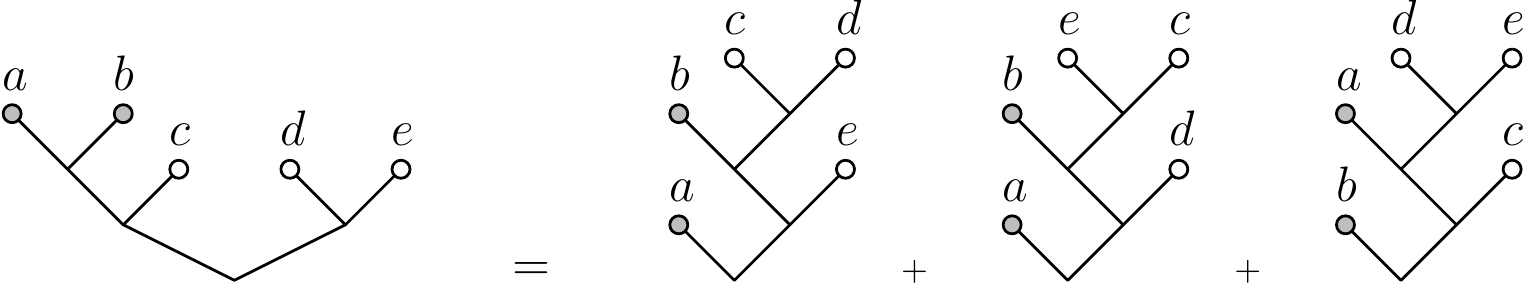}
\caption{Quartic equations for alternatingly udeco tensors; see
Lemma~\ref{lm:AltUdecoCross}.}
\label{fig:cross}
%Lemma~\ref{lm:AltUdecoCross}. In all multiplication trees, the paths
%from $a$ and $b$ to the root have odd lengths, while the paths from the
%other leaves to the root have even lengths.}
\end{center}
\end{figure}

\clearpage

%%%%%%%%%%%%%%%%%%%%%%%%%%%%%%%%%%%%%%%%%%%%%%%%%%%%%%%%%%%%%
\section{Introduction and results}
%%%%%%%%%%%%%%%%%%%%%%%%%%%%%%%%%%%%%%%%%%%%%%%%%%%%%%%%%%%%%

By the singular value decomposition, any complex $m \times n$-matrix $A$
can be written as $A=\sum_{i=1}^k u_i v_i^T,$ where $u_1,\ldots,u_k \in
\CC^m$ and $v_1,\ldots,v_k \in \CC^n$ are sets of nonzero, pairwise
orthogonal vectors with respect to the standard Hermitian forms on
these spaces. The singular values $||u_i|| \cdot ||v_i||,$ including their
multiplicities, are uniquely determined by $A,$ and if these are all
distinct, then so are the terms $u_i v_i^T.$

If $m=n$ and $A$ is symmetric, then the $u_i$ and $v_i$ can be chosen
equal. And if, on the other hand, $A$ is skew-symmetric, then $k$
is necessarily even, say $k=2\ell,$ and one can choose
$v_i=u_{\ell+i}$ for
$i=1,\ldots,\ell$ and $v_i=-u_{i-\ell}$ for $i=\ell+1,\ldots,n,$ so that the
terms can be grouped into pairs of the form $u_i v_i^T - v_i u_i^T$ for
$i=1,\ldots,\ell.$ Note that the two-dimensional spaces $\langle u_i,v_i
\rangle_\CC$ for $i=1,\ldots,\ell$ are pairwise perpendicular.

In this paper we consider {\em higher-order} tensors in a tensor product
$V_1 \otimes \cdots \otimes V_d$ of finite-dimensional vector spaces $V_i$
over $K \in \{\RR, \CC\},$ where the tensor product is also
over $K.$ We assume that each $V_i$ is equipped with a positive-definite
inner product $\rip,$ Hermitian if $K=\CC.$

\begin{de}\label{de:Odeco}
A tensor in $V_1 \otimes \cdots \otimes V_d$ is called
{\em orthogonally decomposable} ({\em odeco}, if $K=\RR$) or {\em unitarily decomposable}
({\em udeco}, if $K=\CC$) if it can be written as
\[
\sum_{i=1}^k v_{i1} \otimes \cdots \otimes v_{id},
\]
where for each $j$ the vectors $v_{1j},\ldots,v_{kj}$ are nonzero and
pairwise orthogonal in $V_j.$
\end{de}

We use the adverb {\em unitarily} for $K=\CC$ to stress that we
have fixed Hermitian inner products rather than symmetric bilinear
forms. Note that orthogonality implies that the number $k$ of terms
is at most the minimum of the dimensions of the $V_i,$ so odeco tensors
form a rather low-dimensional subset of the space of all tensors; see
Proposition~\ref{prop:Dimensions}.

Next we consider tensor powers of a single, finite-dimensional $K$-space
$V.$ We write $\Sym_d(V)$ for the subspace of $V^{\otimes d}$ consisting
of all {\em symmetric tensors}, i.e., those fixed by all permutations
of the tensor factors.

\begin{de}\label{de:SymOdeco}
A tensor in $\Sym_d(V)$ is called {\em symmetrically odeco}
(if $K=\RR$) or {\em symmetrically udeco} (if $K=\CC$) if it can be written as
\[
\sum_{i=1}^k \pm v_i^{\otimes d}
\]
where the vectors $v_1,\ldots,v_k$ are nonzero, pairwise orthogonal
vectors in $V.$
\end{de}

The signs are only required when $K=\RR$ and $d$ is even, as they
can otherwise be absorbed into the $v_i$ by taking a $d$-th root of
$-1.$ Clearly, a symmetrically odeco or udeco tensor is symmetric
and odeco or udeco in the earlier sense. The converse also holds; see
Proposition~\ref{prop:SymAsOrdinary}.

Our third scenario concerns the space $\Alt_d(V) \subseteq V^{\otimes d}$
consisting of all {\em alternating tensors}, i.e., those $T$ for which
$\pi T=\sgn(\pi)T$ for each permutation $\pi$ of $[d].$ The simplest
alternating tensors are the alternating product tensors
\[ v_1 \wedge \cdots \wedge v_d:=\sum_{\pi \in S_d} \sgn(\pi)
v_{\pi(1)} \otimes \cdots \otimes v_{\pi(d)}. \]
This tensor is nonzero if and only if $v_1,\ldots,v_d$ form an
independent set, and it changes only by a scalar factor upon replacing these
vectors by another basis of the space $\langle v_1,\ldots,v_d \rangle.$ We
say that this subspace is {\em represented} by the alternating product
tensor.

\begin{de}\label{de:AltOdeco}
A tensor in $\Alt_d(V)$ is called {\em alternatingly
odeco} or {\em alternatingly udeco}
if it can be written as
\[
\sum_{i=1}^k v_{i1} \wedge \cdots \wedge v_{id},
\]
where the $k\cdot d$ vectors $v_{11},\ldots,v_{kd}$ are nonzero and
pairwise orthogonal.
\end{de}

Equivalently, this means that the tensor is a sum of $k$ alternating
product tensors that represent pairwise orthogonal $d$-dimensional
subspaces of $V;$ by choosing orthogonal bases in each of these spaces
one obtains a decomposition as above. In particular, $k$ is at most
$\lfloor n/d \rfloor.$ For $d \geq 3,$ alternatingly odeco tensors are
not odeco in the ordinary sense unless they are zero; see
Remark~\ref{re:AlternatingNotOdeco}.

By quantifier elimination, it follows that the set of odeco or udeco
tensors is a semi-algebraic set in $V_1 \otimes \cdots \otimes V_d,$
i.e., a finite union of subsets described by polynomial equations and
(weak or strict) polynomial inequalities; here this space is considered
as a real vector space even if $K=\CC.$ A simple compactness argument (see
Proposition~\ref{prop:Closed}) also shows that they form a closed subset
in the Euclidean topology, so that only weak inequalities are needed.
However, our main result says that, in fact, only {\em equations} are
needed, and that the same holds in the symmetrically or alternatingly
odeco or udeco regimes.

\begin{thm}[Main Theorem]
For each integer $d \geq 3,$ for $K \in \{\RR,\CC\},$ and for all
finite-dimensional inner product spaces $V_1,\ldots,V_d$ and $V$ over
$K,$ the odeco/udeco tensors in $V_1 \otimes \cdots \otimes V_d,$ the
symmetrically odeco/udeco tensors in $\Sym_d(V),$ and the alternatingly
odeco/udeco tensors in $\Alt_d(V),$ form real algebraic varieties defined
by polynomials of degree given in the following table.
\end{thm}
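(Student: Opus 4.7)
The plan is, for each of the six scenarios, to associate to a tensor a natural algebra structure on an inner product space and to establish an equivalence between odecomposability and semisimplicity of a specific type, with the defining equations of that type being polynomial of low degree in the entries of the tensor. This strategy matches the hint in the abstract that the resulting algebras are associative in the ordinary and symmetric settings, and of compact Lie type in the alternating setting.

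For a symmetric tensor $T \in \Sym_d(V)$ I would contract $d-2$ slots of $T$ against the inner product $\rip$ to obtain a commutative bilinear product $\mu_T : V \times V \to V$. If $T = \sum_i \pm v_i^{\otimes d}$ with pairwise orthogonal $v_i$, the $v_i$ form a simultaneous orthogonal eigenbasis of every multiplication operator $\mu_T(v,\cdot)$, so $\mu_T$ is commutative, associative, and its multiplication operators are self-adjoint with respect to $\rip$. Each of these three conditions is polynomial of low degree in $T$. Conversely, the spectral theorem applied to the commuting family $\{\mu_T(v,\cdot) : v \in V\}$ of self-adjoint operators yields a simultaneous orthogonal eigendecomposition of $V$, from which the odeco/udeco form of $T$ is recovered. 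The ordinary non-symmetric case is then handled either by reducing to the symmetric case via Proposition~\ref{prop:SymAsOrdinary}, or more directly by imposing simultaneous orthogonal diagonalizability of the Hermitian operators on $V_i \otimes V_j$ obtained by contracting two copies of $T$ along the remaining $d-2$ slots.

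For the alternating case the analogous construction on $T \in \Alt_d(V)$ produces a skew bracket $V \times V \to V$ by contracting $d-2$ slots of $T$ against $\rip$. In the odeco setting this bracket realizes $V$ as an orthogonal direct sum of copies of the alternating Lie algebra on a $d$-dimensional real inner product space (which is compact semisimple), plus a central summand. The defining polynomial conditions are the Jacobi identity (quadratic in the bracket, hence quadratic in $T$) together with a compactness condition ensuring that $\mathrm{ad}$ is anti-self-adjoint with respect to $\rip$; the interplay of these two conditions is what produces the quartic equations depicted in Figure~\ref{fig:cross}.

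The main obstacle throughout will be the converse direction: reconstructing an orthogonal decomposition of $T$ from the algebraic structure. In the associative cases this amounts to a Wedderburn-type decomposition of a commutative semisimple $\ast$-algebra into one-dimensional orthogonal blocks, which must be made compatible with the given inner product. In the alternating case one invokes the classification of compact semisimple Lie algebras and must identify each simple summand with the expected $\mathfrak{so}$- or $\mathfrak{su}$-type model of the correct dimension, which constrains both the shape and the multiplicities of the summands. Additional care is needed in the real symmetric regime to pin down the signs in $\sum \pm v_i^{\otimes d}$, and in the complex udeco regime to distinguish the Hermitian inner product from symmetric bilinear contractions, which forces one to work with both $T$ and its complex conjugate in writing down the defining equations.
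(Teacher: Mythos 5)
Your high-level strategy (tensor $\mapsto$ algebra with compatible inner product; odeco if and only if the algebra is semisimple of a prescribed type) is indeed the paper's, and your spectral-theorem argument for real symmetric three-tensors is sound. But there are three concrete gaps. First, over $\CC$ the product obtained by dualising via a \emph{Hermitian} inner product is bi-\emph{semilinear}, not bilinear: $(cu)\cdot(dv)=\overline{cd}\,(u\cdot v)$. Consequently $x\cdot(y\cdot z)$ and $(x\cdot y)\cdot z$ have different (semi)linearity types in $x$ and $z$, so associativity and the Jacobi identity fail identically except in trivial cases, and the multiplication operators $M_x$ are semilinear, so your ``commuting family of self-adjoint operators'' argument does not apply. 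The correct udeco equations are cubic (semi-associativity $x\cdot(y\cdot(z\cdot u))=z\cdot(y\cdot(x\cdot u))$ and its relatives, and a Casimir-type identity in the alternating case), and their sufficiency needs separate arguments, e.g.\ Hopf's theorem on commutative complex division algebras for ordinary udeco three-tensors. Your proposal treats $K=\CC$ as a bookkeeping variation and would produce the wrong degrees.

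Second, in the alternating odeco case the compatibility $([u,v]|w)=-([u,w]|v)$ is automatic from $T\in\Alt_3(V)$, so ``Jacobi plus anti-self-adjointness of $\mathrm{ad}$'' only says that $(V,[\cdot,\cdot])$ is a compact Lie algebra with invariant inner product. That is not sufficient: the structure tensor of $\liea{su}_3(\CC)$ (real dimension $8$) satisfies all of these conditions but is not alternatingly odeco, since its unique simple summand is not $3$-dimensional. The genuinely needed quartic is the cross-product identity $[x,[[x,y],[x,z]]]=0$, which holds for $\liea{so}_3(\RR)$ but fails for every larger compact simple Lie algebra; it is not produced by the ``interplay'' you describe. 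Third, for $d\geq 4$ there is no canonical binary product on $V$ obtained by ``contracting $d-2$ slots against $\rip$'' --- a $d$-tensor needs $d-1$ inputs to return one output --- and the reduction from order $d$ to order $3$ (via flattenings for ordinary tensors, and via contractions with vectors together with a tangent-space computation for alternating tensors, as in Propositions~\ref{prop:Flatten} and~\ref{prop:AltContraction}) is a substantial independent step that your proposal omits entirely. Note also that Proposition~\ref{prop:SymAsOrdinary} reduces the symmetric case to the ordinary one, not the reverse, so it cannot be used in the direction you indicate.
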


\begin{table}[h]
\begin{tabular}{|l||r|r|r|}
\hline
Degrees of equations & odeco (over $\RR$) & udeco (over $\CC$)\\
\hline
\hline
symmetric & $2$ (associativity) & $3$ (semi-associativity)\\
\hline
ordinary & $2$ (partial associativity) & $3$ (partial semi-associativity)\\
\hline
alternating & $2$ (Jacobi) and $4$ (cross) & $3$ (Casimir) and $4$ (cross)\\
\hline
\end{tabular}
\end{table}

\begin{re}
Several remarks are in order:
\begin{enumerate}
\item Unlike for $d=2,$ for $d \geq 3$ the decomposition in Definitions
\ref{de:Odeco}, \ref{de:SymOdeco}, and \ref{de:AltOdeco} is always unique
in the sense that the terms are uniquely determined, regardless of
whether some of their norms coincide; see Proposition~\ref{prop:Unique}.

\item A direct consequence of the fact that we work with Hermitian forms,
is that even when $K=\CC$ the varieties above are {\em real algebraic}
only, except in the following three degenerate cases: $\dim V$ or some
$\dim V_i$ equals zero; $\dim V_1=1$ for some $i$, so that the set
of odeco/udeco tensors equals the affine cone over the Segre product
of $\PP V_1,\ldots,\PP V_d$; or $d>\dim V/2$ in the alternating case,
so that the set of alternatingly odeco/udeco tensors equals the affine
cone over the Grassmannian of $k$-subspaces of $V$. So apart from these
cases, we need to allow polynomial equations in which both coordinates
with respect to a $\CC$-basis appear and their complex conjugates.

\item We will describe the polynomials defining these varieties in detail
later on, but here is a high-level perspective.  In the odeco case,
the equations of degree two guarantee that some algebra associated
to a tensor is associative (in the ordinary and symmetric cases) or Lie
(in the alternating case), and the equations of degree four come
from a certain polynomial identity satisfied by the cross product
on $\RR^3$.  These degree-four equations are not always required:
e.g., for $\Alt_3(V)$ they can be discarded (leaving only our degree-two
equations) if and only if the real vector space $V$ has dimension $\leq 7$
(see Remark~\ref{re:CrossNotNeeded}).

\item The udeco case is more involved: the equations of degree three
express that some algebra with a bi-semilinear product is (partially)
semi-associative in a sense to be defined below, or, in the case of
alternatingly udeco tensors, that a variant of the Casimir operator
commutes with the multiplication.

\item The listed degrees are minimal in the sense that there are no
linear equations in the odeco case and no quadratic equations in the
udeco case---again, except in degenerate cases. Moreover, the equations
of degree four for the alternating case cannot be simply discarded.
But we do not know whether, instead of the degree-four equations,
lower-degree equations might also suffice.

\item More generally, we do not know whether the equations that we give
generate the prime ideal of all polynomial equations vanishing on our
real algebraic varieties.
\end{enumerate}
\end{re}

The remainder of this paper is organised as follows. In
Section~\ref{sec:Background} we discuss some background and earlier
literature; in particular we show that our degree-two equations follow
from those obtained by the fourth author in \cite{robeva_odeco}, so that
our Main Theorem implies \cite[Conjecture 3.2]{robeva_odeco} from that
paper at a set-theoretic level and over the real numbers.

In Section~\ref{sec:OrderThree} we prove the Main Theorem for tensors
of order three. We first treat odeco tensors, and then the more
involved case of udeco tensors.  The proofs for symmetrically odeco
and udeco three-tensors are the simplest, and those for ordinary odeco
and udeco three-tensors build upon them. The alternatingly odeco and
udeco three-tensors require a completely separate treatment. Then, in
Section~\ref{sec:HigherOrder} we derive the theorem for higher-order
tensors for ordinary, symmetric, and alternating tensors consecutively.
We conclude in Section~\ref{sec:Concluding} with some open questions.

%%%%%%%%%%%%%%%%%%%%%%%%%%%%%%%%%%%%%%%%%%%%%%%%%%%%%%%%%%%%%
\section{Background} \label{sec:Background}
%%%%%%%%%%%%%%%%%%%%%%%%%%%%%%%%%%%%%%%%%%%%%%%%%%%%%%%%%%%%%

In this section we collect background results on orthogonally decomposable
tensors, and connect our paper to earlier work on them.

\begin{prop} \label{prop:Closed}
The set of (ordinary, symmetrically, or alternatingly) odeco or udeco
tensors is closed in the Euclidean topology.
\end{prop}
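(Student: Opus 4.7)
The plan is to prove closedness by a sequential compactness argument: given a convergent sequence of odeco (or udeco) tensors $T_n \to T$, I will renormalize each $T_n$ so that it is described by a point in a compact parameter space — a product of Stiefel manifolds times a bounded orthant of scalars — and then extract a subsequence along which all the data converges. Continuity of the tensor product, together with the fact that orthogonality is a closed condition, will then hand me an odeco decomposition of $T$.

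In detail, consider first the ordinary case in $V_1 \otimes \cdots \otimes V_d$. After passing to a subsequence I may assume that each $T_n$ has a decomposition with the same number $k \leq \min_j \dim V_j$ of terms. Absorbing norms into scalars, I rewrite
\[
T_n = \sum_{i=1}^{k} \lambda_i^{(n)}\, u_{i1}^{(n)} \otimes \cdots \otimes u_{id}^{(n)},
\]
with $\lambda_i^{(n)} \geq 0$ and $(u_{ij}^{(n)})_{i=1}^{k}$ an orthonormal system in $V_j$ for each $j$. Because the $k$ product tensors $u_{i1}^{(n)} \otimes \cdots \otimes u_{id}^{(n)}$ are themselves orthonormal in the induced inner product on the tensor product, one has $\|T_n\|^2 = \sum_i (\lambda_i^{(n)})^2$; since $\|T_n\|$ converges, the scalars $\lambda_i^{(n)}$ are uniformly bounded. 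The Stiefel manifold of orthonormal $k$-tuples in $V_j$ is compact, and so is a closed ball in $[0,\infty)^k$, so a further diagonal subsequence gives $u_{ij}^{(n)} \to u_{ij}$ and $\lambda_i^{(n)} \to \lambda_i \geq 0$. Continuity of the tensor product yields $T = \sum_i \lambda_i\, u_{i1} \otimes \cdots \otimes u_{id}$; orthonormality of the $(u_{ij})_i$ is preserved in the limit (the Gram matrix is continuous in its arguments); and discarding those indices with $\lambda_i = 0$ produces a genuine odeco decomposition.

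The symmetric and alternating variants require only minor bookkeeping. In the symmetric case one simply takes $u_{i1}^{(n)} = \cdots = u_{id}^{(n)} =: u_i^{(n)}$; when $K=\RR$ and $d$ is even there is an additional sign vector in the finite set $\{\pm 1\}^k$, to which one passes to an eventually constant subsequence. In the alternating case one writes
\[
T_n = \sum_{i=1}^{k} \lambda_i^{(n)}\, u_{i1}^{(n)} \wedge \cdots \wedge u_{id}^{(n)}
\]
with all $kd \leq \dim V$ vectors $u_{ij}^{(n)}$ orthonormal in $V$; distinct terms represent mutually orthogonal $d$-subspaces and are therefore themselves orthogonal, giving $\|T_n\|^2 = d!\,\sum_i (\lambda_i^{(n)})^2$, so again the scalars are bounded and the extraction runs as before. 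The udeco case $K = \CC$ is handled identically, since the Hermitian inner product and the Stiefel manifolds of orthonormal tuples in complex Hilbert space are equally continuous and compact.

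No step is really an obstacle; the only mild subtlety is verifying that nothing degenerates badly in the limit. Orthogonality passes to the limit automatically, so the only way the limiting parameters could fail to describe an odeco decomposition is by some $\lambda_i$ vanishing — but then the corresponding term drops out and what remains is still a bona fide odeco/udeco decomposition, with fewer terms. Conceptually, the proposition just records that the natural parametrization map from (Stiefel frames) $\times$ (non-negative scalars, bounded on any bounded ball of tensors) to tensor space is continuous with closed image.
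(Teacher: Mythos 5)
Your proof is correct, and it reaches the conclusion by a genuinely different technical device from the paper's. Both arguments exploit the compactness of the parameter space of orthonormal frames, but you handle the a priori unboundedness of the scalars directly: orthonormality of the rank-one building blocks gives $\|T_n\|^2 = \sum_i (\lambda_i^{(n)})^2$ (times $d!$ in the alternating case), so a convergent sequence of odeco tensors forces bounded scalars, and you then extract a diagonal subsequence and pass to the limit. The paper instead sidesteps the scalar issue by projectivizing: it exhibits the nonzero symmetrically udeco tensors, modulo scaling, as the image of a single continuous map $\lieg{U}_n \times \PP V \to \PP \Sym_d(V)$, whose source is compact and whose image is therefore closed; the affine statement follows by pulling back along the quotient to projective space. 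Your route is more elementary and explicit; the paper's is terser and needs only one well-definedness check, namely that $\sum_i \lambda_i u_i^{\otimes d} \neq 0$ whenever not all $\lambda_i$ vanish, which holds by linear independence of the $u_i^{\otimes d}$. The point you flag as a mild subtlety---terms whose scalar degenerates to zero in the limit---is handled in the paper by allowing zero coefficients from the start (the map is defined on all of $\PP V$), and in yours by discarding such terms, which is equally valid since the definition only constrains the surviving terms. One small remark: your reduction to a constant number $k$ of terms along a subsequence is unnecessary if you instead parametrize, as the paper does, by a full orthonormal frame together with a possibly-zero scalar vector, letting $k$ equal $\min_j \dim V_j$ (or $\dim V$, or $\lfloor \dim V / d\rfloor$) throughout; this slightly streamlines the bookkeeping but does not change the substance.
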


\begin{proof}
We give the argument for symmetrically udeco tensors; the same works in
the other cases. Thus consider the space $V=\CC^n$ with the standard inner
product, let $\lieg{U}_n$ be the unitary group of that inner product,
and consider the map
\[ \varphi: \lieg{U}_n \times \PP V \to \PP \Sym_d(V),\
((u_1|\ldots|u_n),[\lambda_1:\ldots:\lambda_n])
	\mapsto \left[\sum_{i=1}^n \lambda_i u_i^{\otimes d}\right].  \]
Here $\PP$ stands for projective space and where $u_i$ is the $i$-th
column of the unitary matrix $u.$ The key point is that this map
is well-defined and continuous, since the expression between the last
square brackets is never zero by linear independence of the
$u_i^{\otimes d}.$
Now $\varphi$ is a continuous map whose source is a compact topological
space, hence $\im \varphi$ is a closed subset of $\PP \Sym_d(V).$ But then
the pre-image of $\im \varphi$ in $\Sym_d(V) \setminus \{0\}$ is also
closed, and so is the union of this pre-image with $\{0\}.$ This is
the set of symmetrically udeco tensors in $\Sym_d(V).$
\end{proof}

\begin{prop} \label{prop:Unique}
For $d \geq 3,$ any (ordinary, symmetrically, or alternatingly) odeco
or udeco tensor has a {\em unique} orthogonal decomposition.
\end{prop}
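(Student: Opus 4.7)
The plan is to treat the three flavours---ordinary, symmetric, alternating---separately, in each case reducing uniqueness to matrix SVD uniqueness combined with a rigidity statement that only holds for $d\geq 3$. The symmetric case will reduce to the ordinary one; the alternating case will be the main obstacle.

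For ordinary odeco/udeco tensors I would flatten $T$ along the last factor:
\[
T \;=\; \sum_{i=1}^k v_{i,d} \otimes (v_{i,1} \otimes \cdots \otimes v_{i,d-1})
\]
is an SVD of $T$ viewed in $V_d \otimes (V_1 \otimes \cdots \otimes V_{d-1})$. Any second odeco decomposition yields a second SVD, and matrix SVD uniqueness forces the two to differ at most by an orthogonal (resp.\ unitary) rotation $Q$ inside each block of coincident singular values $\sigma_i = \prod_j \|v_{i,j}\|$. The rigidity step is that under $Q$ the rotated right factors must remain pure product tensors in $V_1 \otimes \cdots \otimes V_{d-1}$; since $d-1 \geq 2$ and the original right factors are pairwise orthogonal rank-one tensors, every nontrivial orthogonal/unitary combination of them has tensor rank at least $2$, so $Q$ is forced to be a signed (resp.\ phased) permutation. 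The symmetric case reduces to this: a symmetric decomposition $\sum_i \epsilon_i v_i^{\otimes d}$ is an ordinary odeco decomposition after absorbing each sign into one factor, so the individual rank-one summands---and hence the lines $\langle v_i\rangle$, norms $\|v_i\|$, and signs $\epsilon_i$---are uniquely determined.

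For the alternating case I would consider the contraction $\iota_T : V \to \Alt_{d-1}(V)$, $x \mapsto \iota_x T$. Since $T|_{U_i}$ is a scalar multiple of the Hodge star isomorphism $\Alt_{d-1}(U_i) \cong U_i$, a short computation gives
\[
\iota_T^* \iota_T \;=\; \sum_i \lambda_i^2 \, P_{U_i},
\]
where $U_i := \langle v_{i,1},\ldots,v_{i,d}\rangle$, $\lambda_i := \prod_a \|v_{i,a}\|$, and $P_{U_i}$ is the orthogonal projection onto $U_i$. When the $\lambda_i$ are distinct, the $U_i$ appear as the eigenspaces of this self-adjoint operator and so are unique. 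The delicate case is coincidence: within a common eigenspace $E = \bigoplus_{j=1}^{r} U_{i_j}$ of dimension $rd$ one must show that no other decomposition of $E$ into pairwise orthogonal $d$-spaces yields the same sum of volume forms. This rigidity is sharp in $d$: it fails for $d=2$, as
\[
e_1 \wedge e_2 + e_3 \wedge e_4 \;=\; \tfrac{1}{2}(e_1+e_3)\wedge(e_2+e_4) + \tfrac{1}{2}(e_1-e_3)\wedge(e_2-e_4)
\]
in $\wedge^2 \RR^4$ shows, but for $d \geq 3$ it holds by a direct computation of the $O(E)$-stabiliser of $\sum_j \omega_{U_{i_j}}$: any nontrivial ``cross-block'' infinitesimal rotation produces terms in the wedge expansion supported on distinct blocks that cannot cancel. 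Once the $U_i$ are identified, each summand is recovered as $T_i = T|_{U_i}$.
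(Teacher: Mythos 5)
Your ordinary/symmetric argument is correct but takes a genuinely different route from the paper. The paper contracts $T$ against a \emph{generic} $S\in V_3\otimes\cdots\otimes V_d$ and exploits that a generic contraction has distinct singular values, so no block-rigidity step is needed and the argument is uniform across all three flavours (for the alternating flavour one contracts against a generic $u_1\wedge\cdots\wedge u_{d-2}$). Your version instead flattens directly and handles coincident singular values via the observation that any nontrivial linear combination of the $W_m=v_{m1}\otimes\cdots\otimes v_{m(d-1)}$ has tensor rank $\geq 2$; this is fine, but note that the rank claim uses that the \emph{factors} $v_{mj}$ are orthogonal for fixed $j$, not merely that the $W_m$ are pairwise orthogonal as tensors (orthogonal tensors with parallel first legs can sum to a rank-one tensor). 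Given that, the ordinary case goes through, and the reduction of symmetric to ordinary is correct.

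The alternating case has a genuine gap. Your formula $\iota_T^\ast\iota_T=\sum_i|\lambda_i|^2 P_{U_i}$ is right, but the key step --- that within a common eigenspace $E$ the decomposition of $T|_E$ into pairwise orthogonal $d$-dimensional volume forms is unique --- is justified only by an \emph{infinitesimal} computation (``any nontrivial cross-block infinitesimal rotation produces terms \dots that cannot cancel''). That computation shows the Lie algebra of the stabilizer of $T|_E$ in $O(E)$ (resp.\ $U(E)$) has no cross-block part, hence that the orbit of a given decomposition under the within-block/permutation group is an \emph{open and closed} subset of the fibre of the parameterization map. It does not show the fibre is connected, so it proves only local uniqueness, not that no discrete alternative decomposition exists. (Your $d=2$ counterexample is itself a discrete, not infinitesimal, degeneration, which is exactly the kind of phenomenon the infinitesimal argument cannot rule out for $d\geq 3$.) Moreover, there is a second subtlety you gloss over: if the two decompositions of $T|_E$ carry different phase/sign patterns $\lambda_j$ vs.\ $\mu_j$ with only $|\lambda_j|=|\mu_j|$, the orthogonal change of basis $g$ taking one system of $d$-subspaces to the other need not stabilize $T|_E$ at all, so the whole ``compute the stabilizer'' framing does not directly apply. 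The clean ways to close the gap are either the paper's generic-contraction argument, or to invoke the uniqueness of the decomposition of the semisimple (Lie-type) algebra $(V,[\cdot,\cdot])$ into orthogonal simple ideals; as written, your proof of the alternating case is incomplete.
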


In the ordinary case this was proved in \cite[Theorem 3.2]{Zhang01}.

\begin{proof}
We give the argument for ordinary odeco tensors. Consider an orthogonal
decomposition
\[ T=\sum_{i=1}^k v_{i1} \otimes \cdots \otimes v_{id} \]
of an odeco tensor $T \in V_1 \otimes \cdots \otimes V_d.$ Contracting
$T$ with an arbitrary tensor $S \in V_3 \otimes \cdots \otimes V_d$
via the inner products on $V_3,\ldots,V_d$ leads to a tensor
\[ T'=\sum_{i=1}^k \lambda_i v_{i1} \otimes v_{i2} \]
where $\lambda_i$ is the inner product of $S$ with $v_{i3} \otimes
\cdots \otimes v_{id}.$ Now the above is a singular value decomposition
for the two-tensor $T',$ of which, for $S$ sufficiently general, the
singular values $|\lambda_i| \cdot ||v_{i1}|| \cdot ||v_{i2}||$ are
all distinct. Thus $v_{11},\ldots,v_{k1}$ are, up to nonzero scalars,
uniqely determined as the singular vectors (corresponding to the nonzero
singular values) of the pairing of $T$ with a
sufficiently general $S.$ And these vectors determine the corresponding
terms, since the $i$-th term equals $v_{i1}$ tensor the pairing of $T$
with $v_{i1},$ divided by $||v_{i1}||^2.$

The arguments in the symmetric or alternating case, as well as in the
udeco case, are almost identical. We stress that, as permuting the
first two factors commutes with contracting the last $d-2$ factors,
the contraction of a symmetric or alternating tensor is a symmetric
or alternating matrix. Also, in the alternating case, rather than
contracting with a general $S$, we contract with a general alternating
product tensor $S=u_1 \wedge \cdots \wedge u_{d-2}$. This has the effect
of intersecting the space spanned by $v_{i1},\ldots,v_{id}$ with the
orthogonal complement of the space spanned by $u_1,\ldots,u_{d-2}$.
\end{proof}

Note that the proof of this proposition yields a simple randomised
algorithm for deciding whether a tensor is odeco or udeco, and for finding
a decomposition when it exists. At the heart of this algorithm is the
computation of an ordinary singular-value decomposition for a small
matrix. For much more on algorithmic issues see
\cite{Batselier15,Kolda15,Salmi09,Zhang01}.

The uniqueness of the orthogonal decomposition makes it easy to compute
the dimensions of the real-algebraic varieties in our Main Theorem.

\begin{prop} \label{prop:Dimensions}
Let $n:=\dim_K V$,  $l:=\lfloor \frac{n}{d}
\rfloor$, and assume that the dimensions $n_i:=\dim_K V_i$ are
in increasing order  $n_1 \leq \ldots \leq n_d$.
 Then, the dimensions of the real-algebraic varieties of odeco/udeco
 tensors are given in the following table.
\end{prop}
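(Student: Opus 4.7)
The plan is to exploit the uniqueness of the orthogonal decomposition (Proposition~\ref{prop:Unique}) to turn each dimension computation into a parameter count modulo a continuous symmetry group. For each of the six cases I would introduce the natural parametrization map $\phi\colon\mathcal{P}\to V_1\otimes\cdots\otimes V_d$ (or its symmetric or alternating analogue) whose image is exactly the variety in question. Here $\mathcal{P}$ consists of: an orthonormal $k$-frame in each $V_j$ together with $k$ scalars (ordinary case, $k=n_1$); a single orthonormal $k$-frame in $V$ with $k$ scalars (symmetric case, $k=n$); or an orthonormal $(ld)$-frame in $V$ organised into $l$ packets of size $d$ together with $l$ scalars (alternating case, $k=l$). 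The scalars are real in the odeco cases and complex in the udeco cases.

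By Proposition~\ref{prop:Unique} the decomposition is unique for $d\geq 3$, so the generic fibre of $\phi$ is an orbit of a natural symmetry group $\Gamma$ acting on $\mathcal{P}$. The discrete symmetries in $\Gamma$---permutation of the $k$ terms, sign changes of individual unit vectors in the real cases, and $d$-th-root-of-unity ambiguities in the complex symmetric case---contribute nothing to dimension. The continuous subgroup $G\subseteq\Gamma$ is: $U(1)^{kd}$ acting by independent phases $\phi_{ij}$ on the unit vectors $\hat v_{ij}$ with the product $\prod_j e^{-i\phi_{ij}}$ absorbed into $c_i$ in the ordinary udeco case; $U(1)^k$ acting similarly in the symmetric udeco case; and the orthogonal (respectively unitary) group $O(d)^l$ (respectively $U(d)^l$) acting within each $d$-packet in the alternating odeco (respectively udeco) case, with the wedge product's determinant being absorbed into $c_i$. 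A short check shows that $G$ acts with trivial continuous stabiliser on the generic locus of $\mathcal{P}$, so $\dim_{\RR}(\text{variety})=\dim_{\RR}\mathcal{P}-\dim_{\RR}G$.

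The entries of the table then follow from standard Stiefel and Lie group dimension formulas: $\dim\mathrm{St}(k,n)=kn-\binom{k+1}{2}$, $\dim_{\RR}\mathrm{St}_\CC(k,n)=2kn-k^2$, $\dim O(d)=\binom{d}{2}$, and $\dim_{\RR}U(d)=d^2$. The main delicacy is the bookkeeping: correctly identifying the continuous symmetry group $G$ in each of the six cases and verifying that its action has trivial continuous stabiliser on a dense open subset of $\mathcal{P}$. Once this is done, the six dimensions are an elementary computation.
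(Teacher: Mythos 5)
Your proposal is correct and follows essentially the same route as the paper: exploit the uniqueness of the decomposition (Proposition~\ref{prop:Unique}) to equate the dimension of the variety with a parameter count for the decomposition data. The paper counts directly in terms of sequences of pairwise orthogonal projective points (or $d$-dimensional subspaces, via Grassmannians) together with scalars, whereas you count frames in Stiefel manifolds and then subtract the dimension of the residual continuous symmetry group $G$; these are two bookkeepings of the same computation and yield the same table entries.
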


\begin{table}[h]
\begin{tabular}{|l||r|r|r|}
\hline
Dimension over $\RR$ & odeco & udeco\\
\hline
\hline
symmetric & $n+\binom{n}{2}$ & $2n+n(n-1)$ \\
\hline
ordinary & $n_1+\sum_{j=1}^d \frac{n_1(2n_j-n_1-1)}{2}$ &
$2n_1+\sum_{j=1}^d n_1(2n_j-n_1-1)$\\
\hline
alternating & $l+\frac{ld(2n-(l+1)d)}{2}$ & $2l+ld(2n-(l+1)d)$ \\
\hline
\end{tabular}
\end{table}

\begin{proof}
In the symmetric case, a symmetrically odeco/udeco tensor encodes $n$ pairwise
perpendicular points in $\PP V$. For the first point we have $n-1$ degrees
of freedom over $K$. The second point is chosen from the projective
space orthogonal to the first point, so this yields $n-2$ degrees of freedom,
etc. Summing up, we obtain $\binom{n}{2}$ degrees of freedom over $K$ for
the points. In addition, we have $n$ scalars from $K$ for the individual
terms. If $K=\CC$ we multiply by two to obtain the real dimension. Since each
odeco/udeco tensor has a unique decomposition, the dimension of the odeco/udeco
variety is the same as the dimension of the space of $n$ pairwise orthogonal points and
$n$ scalars.

The computation for the ordinary case is the same, except that only $n_1$
pairwise perpendicular projective points are chosen from each $V_j$.

In the alternating case, an alternatingly odeco/udeco tensor encodes $l$
pairwise perpendicular $d$-dimensional $K$-subspaces of $V$. The first
space is an arbitrary point on the $d(n-d)$-dimensional Grassmannian of
$d$-subspaces, the second an arbitrary point on the $d(n-2d)$-dimensional
Grassmannian of $d$-subspaces in the orthogonal complement of the first,
etc. Add $l$ degrees of freedom for the scalars, and
if $K=\CC$, multiply by $2$. By uniqueness of the decomposition, the dimension
of the odeco/udeco variety is preserved and as given in the table above.
\end{proof}

\bigskip

Over the last two decades, orthogonal tensor decomposition has been
studied intensively from a scientific computing perspective (see,
e.g., \cite{Comon94,K01,K03,Chen09,Kolda15}), though the alternating
case has not received much attention so far. The paper \cite{Chen09}
gives a characterisation of orthogonally decomposable tensors in terms
of their {\em higher-order SVD} \cite{Lathauwer00b}, which is different
from the real-algebraic characterisation in our Main Theorem.  One of
the interesting properties of an orthogonal tensor decomposition with
$k$ terms is that discarding the $r$ terms with smallest norm yields the
best rank-$r$ approximation to the tensor; see \cite{Vannieuwenhoven14},
where it is also proved that in general, tensors are not {\em optimally
truncatable} in this manner.

In general, tensor decomposition is NP-hard \cite{HL}. The decomposition
of odeco tensors, however, can  be found efficiently.  The vectors in
the decomposition of an odeco tensor are exactly the attraction points of
the {\em tensor power method} and are called {\em robust} eigenvectors.
Because of their efficient decomposition, odeco tensors have been used
in machine learning, in particular for learning latent variables in
statistical models \cite{AGHKT}.  More recent work in this direction
concerns overcomplete latent variable models \cite{Anandkumar14}.

In \cite{robeva_odeco}, the fourth author describes all eigenvectors of
symmetrically odeco tensors in terms of the robust ones, and conjectures
the equations defining the variety of symmetrically odeco tensors.
Formulated for the case of ordinary tensors instead, this conjecture is as
follows. Let $V_1,\ldots,V_d$ be real inner product spaces, and consider
an odeco tensor $T \in V_1 \otimes \cdots \otimes V_d$ with orthogonal
decomposition $ T=\sum_{i=1}^k v_{i1} \otimes \cdots \otimes v_{id}.$
Now take two copies of $T$, and contract these in their $l$-th components
via the inner product $V_l \times V_l \to \RR$. By orthogonality of
the $v_{il},\ i=1,\ldots,k$, after regrouping the tensor factors, the
resulting tensor is
\[ \sum_{i=1}^k \left( ||v_{il}||^2 \bigotimes_{j \neq l} (v_{ij}
\otimes v_{ij}) \right) \in \bigotimes_{j \neq l}
(V_j \otimes V_j); \]
we write $T *_l T$ for this tensor. It is clear from this expression that
$T *_l T$ is multi-symmetric in the sense that it lies in the subspace
$\bigotimes_{j \neq l} \Sym_2(V_j)$. The fourth author conjectured that
this (or rather, its analogue in the symmetric setting) characterises
odeco tensors. This is now a theorem, which follows from the proof of
our main theorem (see Remark~\ref{re:Elina}).

\begin{thm} \label{thm:Elina}
$T \in V_1 \otimes \cdots \otimes V_d$ is odeco if and only
if for all $l=1,\ldots,d$ we have
\[ T *_l T \in \bigotimes_{j \neq l} \Sym_2(V_j). \]
\end{thm}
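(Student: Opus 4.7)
\emph{Proof plan.} The ``only if'' direction is established by the computation displayed in the paragraph preceding the theorem: if $T=\sum_{i=1}^k v_{i1}\otimes\cdots\otimes v_{id}$ with $v_{1l},\ldots,v_{kl}$ pairwise orthogonal in each $V_l$, then $T*_l T = \sum_{i=1}^k \|v_{il}\|^2 \bigotimes_{j\neq l}(v_{ij}\otimes v_{ij})$, which manifestly lies in $\bigotimes_{j\neq l}\Sym_2(V_j)$. The content of the theorem is therefore the converse, and for this my plan is to reduce to the ``partial associativity'' half of the Main Theorem.

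Fix orthonormal bases so that $T$ has coefficients $T_{i_1\ldots i_d}$. Then
\[
(T*_l T)_{(i_j)_{j\neq l};(i_j')_{j\neq l}} \;=\; \sum_{a} T_{i_1,\ldots, a,\ldots, i_d}\,T_{i_1',\ldots, a,\ldots, i_d'},
\]
with $a$ inserted in position $l$ in both factors. For each $m\neq l$, requiring that this tensor lie in $\Sym_2(V_m)$ in its two $V_m$-slots amounts to invariance under the swap $i_m \leftrightarrow i_m'$, i.e. to the quadratic equation
\[
\sum_a T_{\ldots,i_m,\ldots,a,\ldots}\,T_{\ldots, i_m',\ldots,a,\ldots} \;=\; \sum_a T_{\ldots,i_m',\ldots,a,\ldots}\,T_{\ldots, i_m,\ldots,a,\ldots},
\]
where unprimed/primed indices belong to the first/second copy of $T$ and the dots stand for the remaining indices, which are fixed but arbitrary. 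Ranging over all pairs $(l,m)$ with $l\neq m$ and all values of those free indices yields a system of quadratic polynomial equations in the entries of $T$.

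I would then identify this system with the partial-associativity equations which, by the Main Theorem, cut out the variety of odeco tensors in $V_1\otimes\cdots\otimes V_d$. Structurally the match is natural: partial associativity expresses that the two ways of contracting two copies of $T$ along their $l$-th factor while pairing a further $V_m$-factor symmetrically must agree, and this is exactly the multi-symmetry condition on $T*_l T$ displayed above. Once the identification is made, the Main Theorem delivers that $T$ is odeco, completing the proof. The main obstacle I anticipate is precisely this bookkeeping step: one must verify that the multi-symmetry equations, taken over all $l$, generate (or at least imply) the \emph{full} system of partial-associativity equations, and not merely a subset that could cut out a strictly larger real-algebraic variety. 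The fact that we work over $\mathbb{R}$ (no conjugation) and that both systems are quadratic of the same flavour — contract two copies of $T$ along one factor, then compare symmetry in another — makes this a matter of re-indexing rather than a genuine new argument.
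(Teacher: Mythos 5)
Your proposal is correct and follows the paper's own route (Remark~\ref{re:Elina}): the multi-symmetry of $T *_l T$ in each pair of $V_m$-slots is precisely the coordinate form of partial associativity for the algebra $A=V_1\oplus\cdots\oplus V_d$ restricted to three factors, so Proposition~\ref{prop:AssOdeco} gives the converse for $d=3$ and the case $d\geq 4$ reduces to it by flattening as in Proposition~\ref{prop:Flatten}, exactly the bookkeeping you anticipate. One small caution: as literally displayed, your symmetry equation is a tautology (the two sides are the same sum with the scalar factors commuted); the intended, and correct, condition keeps the unprimed spectator indices on the first copy of $T$ and the primed ones on the second while swapping only $i_m\leftrightarrow i_m'$, as your surrounding sentence indicates.
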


This concludes the discussion of background to our results. We now
proceed to prove the main theorem in the case of order-three tensors.

%SOMETHING ABOUT APPLICATIONS AND EARLIER RESULTS. ELINA?

%%%%%%%%%%%%%%%%%%%%%%%%%%%%%%%%%%%%%%%%%%%%%%%%%%%%%%%%%%%%%
\section{Tensors of order three} \label{sec:OrderThree}
%%%%%%%%%%%%%%%%%%%%%%%%%%%%%%%%%%%%%%%%%%%%%%%%%%%%%%%%%%%%%

In all our proofs below, we will encounter a finite-dimensional vector
space $A$ over $K=\RR$ or $\CC$ equipped with a positive-definite
inner product $(\cdot|\cdot),$ as well as a bi-additive product $A \times A
\to A,\ (x,y) \mapsto x \cdot y$ which is bilinear if $K=\RR$ and
bi-semilinear if $K=\CC.$ The product will be either commutative or
anti-commutative. Moreover, the inner product will be {\em compatible}
with the product in the sense that $(x\cdot y|z)=(z \cdot x|y).$ An {\em
ideal} in $(A,\cdot)$ is a $K$-subspace $I$ such that $I \cdot A \subseteq
I$---by (anti-)commutativity we then also have $A \cdot I \subseteq
I$---and $A$ is called {\em simple} if $A \neq \{0\}$ and $A$ contains
no nonzero proper ideals. We have the following well-known result.

\begin{lm} \label{lm:Orthoplement}
The orthogonal complement $I^\perp$ of any ideal $I$ in $A$ is an ideal,
as well. Consequently, $A$ splits as a direct sum of pairwise orthogonal
simple ideals.
\end{lm}

\begin{proof}
We have $(A \cdot I^\perp | I) = (I \cdot A |I^\perp)=\{0\}.$ The
second statement follows by induction on $\dim A.$
\end{proof}

%%%%%%%%%%%%%%%%%%%%%%%%%%%%%%%%%%%%%%%%%%%%%%%%%%%%%%%%%%%%%
\subsection{Symmetrically odeco three-tensors} \label{sec:S3Vodeco}
%%%%%%%%%%%%%%%%%%%%%%%%%%%%%%%%%%%%%%%%%%%%%%%%%%%%%%%%%%%%%

In this subsection, we fix a finite-dimensional real inner product space
$V$ and characterise odeco tensors in $\Sym_3(V).$ We have $\Sym_3(V)
\subseteq V^{\otimes 3} \cong (V^*)^{\otimes 2} \otimes V,$ where the
isomorphism comes from the linear isomorphism $V \to V^*,\ v \mapsto
(v|\cdot).$ Thus a general tensor $T \in \Sym_3(V)$ gives rise to a bilinear
map $V \times V \to V,\ (u,v) \mapsto u\cdot v,$ which has the following
properties:
\begin{enumerate}
\item $u\cdot v=v\cdot u$ for all $u,v \in V$ (commutativity, which
follows from the fact that $T$ is invariant under permuting the first
two factors); and
\item $(u\cdot v|w)=(u\cdot w|v)$ (compatibility with the inner product,
which follows from the fact that $T$ is invariant under permuting the
last two factors).
\end{enumerate}
Thus $T$ gives $V$ the structure of an $\RR$-algebra equipped with a
compatible inner product. The following lemma describes the quadratic
equations from the Main Theorem.

\begin{lm} \label{lm:SymOdecoAss}
If $T$ is symmetrically odeco, then $(V,\cdot)$ is associative.
\end{lm}

\begin{proof}
Write $T=\sum_{i=1}^k v_i^{\otimes 3}$ where $v_1,\ldots,v_k$ are pairwise
orthogonal nonzero vectors. Then we find, for $x,y,z \in V,$ that
\[ x\cdot (y\cdot z)=x\cdot \left(\sum_i (v_i|y)(v_i|z) v_i\right)=\sum_i
(v_i|x)(v_i|y)(v_i|z)(v_i|v_i)=(x\cdot y)\cdot z, \]
where we have used that $(v_i|v_j)=0$ for $i \neq j$ in the second
equality.
\end{proof}

\begin{prop} \label{prop:AssSymOdeco}
Conversely, if $(V,\cdot)$ is associative, then $T$
is symmetrically odeco.
\end{prop}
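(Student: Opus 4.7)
The plan is to exploit that associativity plus commutativity of $(V,\cdot)$ makes the family of multiplication operators $L_v\colon u\mapsto v\cdot u$ a commuting family of self-adjoint operators, and then simultaneously diagonalize to get an orthogonal basis of ``rank-one'' idempotent-like elements.

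First I would observe that, by compatibility of the inner product, each $L_v$ is self-adjoint: $(L_v u|w)=(v\cdot u|w)=(v\cdot w|u)=(L_v w|u)$. By associativity and commutativity, $L_u L_v=L_{u\cdot v}=L_v L_u$, so $\{L_v:v\in V\}$ is a commuting family of self-adjoint operators on the real inner product space $V$. Hence there exists an orthonormal basis $e_1,\ldots,e_n$ of $V$ simultaneously diagonalising all $L_v$. Writing $L_v e_i=\lambda_i(v)e_i$ with $\lambda_i\in V^*$, evaluation gives $e_j\cdot e_i=\lambda_i(e_j)e_i$, while commutativity forces this to equal $\lambda_j(e_i)e_j$. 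For $i\ne j$ the two sides lie in different coordinate lines, so $\lambda_i(e_j)=0$, i.e.\ $e_i\cdot e_j=0$, while $e_i\cdot e_i=\mu_i e_i$ for some $\mu_i\in\RR$.

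Next I would reconstruct $T$ from this orthogonal idempotent-like basis. Since $T(u,v,w)=(u\cdot v|w)$ is fully symmetric in $u,v,w$ (using both axioms) and $(e_i\cdot e_j|e_k)=\mu_i\delta_{ij}\delta_{jk}$, expanding in the orthonormal basis yields $T=\sum_{i=1}^n \mu_i e_i^{\otimes 3}$. Over $\RR$ every real number has a real cube root, so setting $v_i:=\mu_i^{1/3}e_i$ (and discarding indices with $\mu_i=0$) gives $T=\sum_{i} v_i^{\otimes 3}$ with $v_i$ nonzero and pairwise orthogonal, which is the required symmetrically odeco decomposition.

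As a variant, one can bypass the explicit diagonalisation by first invoking Lemma~\ref{lm:Orthoplement} to split $V$ as an orthogonal sum of simple ideals, and then showing that any commutative associative $\RR$-algebra with compatible positive-definite inner product that is simple in this ideal sense must be one-dimensional---again by picking any nonzero $e$ and diagonalising the self-adjoint operator $L_e$, whose eigenspaces are ideals. The only real obstacle in either approach is verifying simultaneous diagonalisability cleanly in the absence of a unit element; everything else is bookkeeping, and the passage from $\mu_i e_i^{\otimes 3}$ to $v_i^{\otimes 3}$ is trivial over $\RR$ in odd degree.
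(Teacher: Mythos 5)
Your proof is correct, and it takes a genuinely different route from the paper's. You observe that compatibility makes every multiplication operator $L_v$ self-adjoint and that commutativity plus associativity gives $L_uL_v=L_{u\cdot v}=L_vL_u$, so $\{L_v\}$ is a commuting family of symmetric operators on a finite-dimensional real inner product space; simultaneous orthogonal diagonalisation (which indeed needs no unit element) then hands you an orthonormal basis with $e_i\cdot e_j=\delta_{ij}\mu_i e_i$, and $T=\sum_i\mu_i e_i^{\otimes 3}$ follows by expanding $(u\cdot v|w)$, with the cube roots absorbing the signs since $d=3$ is odd. The paper instead first splits $V$ into orthogonal simple ideals via Lemma~\ref{lm:Orthoplement}, shows on a simple ideal that some $M_x$ is invertible, twists the product by $M_x^{-1}$ to obtain a simple unital commutative associative $\RR$-algebra, and then invokes the classification (it must be $\RR$ or $\CC$) together with a positivity argument to exclude $\CC$. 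Your spectral argument is shorter, more elementary, and more explicitly constructive (it essentially is the decomposition algorithm); the paper's ideal-theoretic scaffolding is heavier here but is the template that survives in the other five scenarios, where the product is bi-semilinear or anticommutative and the multiplication operators no longer form a commuting self-adjoint family, so a direct simultaneous diagonalisation is unavailable.
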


\begin{proof}
By Lemma~\ref{lm:Orthoplement}, $V$ has an orthogonal decomposition
$V=\bigoplus_i U_i$ where the subspaces $U_i$ are (nonzero)
simple ideals.  Correspondingly, $T$ decomposes as an element of
$\bigoplus_i \Sym_3(U_i).$ Thus it suffices to prove that each $U_i$ is
one-dimensional. This is certainly the case when the multiplication $U_i
\times U_i \to U_i$ is zero, because then any one-dimensional subspace
of $U_i$ is an ideal in $V,$ hence equal to $U_i$ by simplicity. If the
multiplication map is nonzero, then pick an element $x \in
U_i$ such that the
multiplication $M_x: U_i \to U_i,\ y \mapsto x \cdot y$ is nonzero. Then
$\ker M_x$ is an ideal in $V,$ because for $z \in V$ we have
\[ x\cdot(\ker M_x \cdot z)=(x \cdot \ker M_x)\cdot z=\{0\}, \]
where we use associativity. By simplicity of $U_i,$ $\ker
M_x=\{0\}.$  Now define a new bilinear multiplication $*$ on $U_i$ via
$y*z:=M_x^{-1}(y\cdot z).$ This multiplication is commutative, has $x$
as a unit element, and we claim that it is also associative. Indeed,
\[ ((x \cdot y) * z) * (x \cdot v)
=
M_x^{-1} (M_x^{-1} ((x \cdot y) \cdot z) \cdot (x \cdot v))
=
y \cdot z \cdot v
= (x \cdot y) * (z * (x \cdot v)),
\]
where we used associativity and commutativity of $\cdot$ in the
second equality. Since any element is a multiple of $x,$ this proves
associativity. Moreover, $(U_i,*)$ is simple; indeed, if $I$
is ideal, then $M_x^{-1} (U_i \cdot I) \subseteq I$ and
hence
\[ U_i \cdot (x \cdot I)=(U_i \cdot x) \cdot I=U_i \cdot I
\subseteq x\cdot I, \]
so that $x \cdot I$ is an ideal in $(U_i,\cdot);$ and therefore
$I=\{0\}$ or $I=U_i$.

Now $(U_i,*)$ is a simple, associative $\RR$-algebra with
$1,$ hence isomorphic to a matrix algebra over a division ring. As it
is also commutative, it is isomorphic to either $\RR$ or $\CC.$ If it
were isomorphic to $\CC,$ then it would contain a square root of $-1,$
i.e., an element $y$ with $y*y=-x,$ so that $y \cdot y=-x \cdot x.$
But then
\[ 0<(x\cdot y|x\cdot y)=(y \cdot y|x \cdot x)=-(x \cdot x|x \cdot
x)<0,
\]
a contradiction. We conclude that $U_i$ is one-dimensional, as
desired.
\end{proof}

Lemma~\ref{lm:SymOdecoAss} and Proposition~\ref{prop:AssSymOdeco} imply the Main
Theorem for symmetrically odeco three-tensors, because the identity
$x \cdot (y \cdot z)=(x \cdot y) \cdot z$ expressing associativity
translates into quadratic equations for the tensor $T.$

%%%%%%%%%%%%%%%%%%%%%%%%%%%%%%%%%%%%%%%%%%%%%%%%%%%%%%%%%%%%%
\subsection{Ordinary odeco three-tensors}
%%%%%%%%%%%%%%%%%%%%%%%%%%%%%%%%%%%%%%%%%%%%%%%%%%%%%%%%%%%%%

\begin{figure}
\begin{center}
\includegraphics{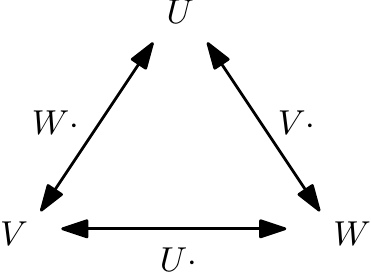}
\caption{$U \cdot (V+W)=W+V,$ and similarly with $U,V,W$ permuted.}
\label{fig:UVW}
\end{center}
\end{figure}

In this subsection, we consider a general tensor $T$ in a tensor product
$U \otimes V \otimes W$ of real, finite-dimensional inner product
spaces. Via the inner products, $T$ gives rise to a bilinear map $U
\times V \to W,$ and similarly with the three spaces
permuted. Consider the external direct sum $A:=U
\oplus V \oplus W$ of $U,V,W$, and equip $A$
with the inner product $\rip$ that restricts to the given inner products
on $U,V,W$ and that makes these spaces pairwise perpendicular.

Taking cue from the symmetric case, we construct a bilinear
product $\cdot: A \times A \to A$ as follows: the product in $A$ of two
elements in $U,$ or two elements in $V,$ or in $W,$ is defined as zero;
$\cdot$ restricted to $U \times V$ is the map into $W$ given by $T;$
etc.---see Figure~\ref{fig:UVW}.  The tensor in $\Sym_3(A)$ describing the
multiplication is the symmetric embedding of $T$ from \cite{Ragnarsson13}.

As in the symmetrically odeco case, the algebra has two fundamental properties:
\begin{enumerate}
\item it is commutative: $x\cdot y=y \cdot x$ by definition; and
\item the inner product is compatible: $(x\cdot y|z)=(x\cdot z|y).$ For
instance, if $x \in U,y \in V,z \in W,$ then both sides equal the inner
product of the tensor $x \otimes y \otimes z$ with $T;$ and if $y,z \in
W,$ then both sides are zero both for $x \in U$ (so that $x\cdot y,x\cdot z \in V,$
which is perpendicular to $W$) and for $x \in W$ (so that $x\cdot y=x\cdot z=0$)
and for $x \in V$ (so that $x\cdot y,x\cdot z \in U \perp W$).
\end{enumerate}

We are now interested in {\em homogeneous} ideals $I \subseteq A$ only, i.e.,
ideals such that $I=(I \cap U) \oplus (I \cap V) \oplus (I \cap W).$
We call $A$ \emph{simple} if it is nonzero and does not contain proper, nonzero
homogeneous ideals. We will call an element of $A$ homogeneous if it
belongs to one of $U,V,W.$ Next, we derive a polynomial identity for
odeco tensors.

\begin{lm} \label{lm:OdecoAss}
If $T$ is odeco, then for all homogeneous $x,y,z$ where $x$ and $z$ belong
to the same space ($U,V,$ or $W$), we have $(x\cdot y)\cdot z=x\cdot (y
\cdot z).$
\end{lm}

We will refer to this property as {\em partial associativity}.

\begin{proof}
If $x,y,z$ all belong to the same space, then both products are zero.
Otherwise, by symmetry, it suffices to check the case where $x,z \in
U$ and $y \in V.$ Let $T=\sum_i u_i \otimes v_i \otimes w_i$ be an
orthogonal decomposition of $T.$ Then we have
\[
(x\cdot y) \cdot z =
\left(\sum_i (u_i|x)(v_i|y) w_i\right) \cdot z =
\sum_i (u_i|x)(v_i|y)(w_i|w_i)(z|u_i)=
x \cdot (y \cdot z),
\]
where we have used that $(w_i|w_j)=0$ for $i \neq j$ in the second
equality.
\end{proof}

\begin{prop} \label{prop:AssOdeco}
Conversely, if $(A,\cdot)$ is partially associative, then $T$ is odeco.
\end{prop}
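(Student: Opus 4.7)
The plan is to mimic Proposition~\ref{prop:AssSymOdeco} while tracking the grading $A = U \oplus V \oplus W$. First I would call a subspace of $A$ \emph{homogeneous} if it is the direct sum of its intersections with $U$, $V$, and $W$, and note that the computation $(A \cdot I^\perp | I) = (I \cdot A | I^\perp) = 0$ of Lemma~\ref{lm:Orthoplement} works verbatim, while homogeneity of the orthogonal complement follows from the pairwise orthogonality of $U$, $V$, $W$. By induction this gives an orthogonal decomposition $A = A_1 \oplus \cdots \oplus A_r$ into simple homogeneous ideals $A_i = U_i \oplus V_i \oplus W_i$. Since $A_i \cdot A_j \subseteq A_i \cap A_j = \{0\}$ for $i \neq j$, the tensor splits as $T = \sum_i T_i$ with $T_i \in U_i \otimes V_i \otimes W_i$, and it remains to show that each $T_i$ has rank at most one.

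Fix a simple summand. Setting $K^V_i := \{v \in V_i : U_i \cdot v = 0\}$ and analogously $K^U_i, K^W_i$, a short computation using compatibility shows that $K^U_i \oplus K^V_i \oplus K^W_i$ is a homogeneous ideal of $A_i$ with zero multiplication. By simplicity this ideal is either all of $A_i$---in which case $T_i = 0$---or trivial; I assume the latter. For $x \in U_i$ the operator $M_x \colon V_i \oplus W_i \to V_i \oplus W_i,\ a \mapsto x \cdot a$, is self-adjoint by compatibility, and the family $\{M_x : x \in U_i\}$ is pairwise commuting by partial associativity for pairs in $U_i$. These operators can therefore be simultaneously diagonalised in an orthonormal basis. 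Since $M_x$ interchanges $V_i$ and $W_i$, each joint eigenvector has the form $v_\lambda \pm w_\lambda$ with $v_\lambda \in V_i$, $w_\lambda \in W_i$ of equal norm satisfying $x \cdot v_\lambda = \lambda(x) w_\lambda$ and $x \cdot w_\lambda = \lambda(x) v_\lambda$ for all $x \in U_i$, where the joint eigenvalue $\lambda \in U_i^*$ is identified with a vector $u_\lambda \in U_i$ via the inner product.

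The crucial final step is to show that, for distinct joint eigenvalue functionals $\lambda \neq \lambda'$, the vectors $u_\lambda$ and $u_{\lambda'}$ are orthogonal in $U_i$. I would establish this by running the analogous simultaneous diagonalisation for the commuting self-adjoint family $\{L_v : v \in V_i\}$ on $U_i \oplus W_i$, $L_v \colon a \mapsto v \cdot a$ (with commutativity now coming from partial associativity for pairs in $V_i$), and then expanding the vanishing commutator $[L_{v_\lambda}, L_{v_{\lambda'}}](w_{\lambda'})$ in the eigenbasis from the previous paragraph. This reduces to the identity $(u_\lambda | u_{\lambda'}) w_\lambda = 0$, forcing $(u_\lambda | u_{\lambda'}) = 0$. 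Once this orthogonality is in hand, each triple $\RR u_\lambda \oplus \RR v_\lambda \oplus \RR w_\lambda$ is easily checked to be a homogeneous ideal of $A_i$, and simplicity collapses $A_i$ to a single such triple, yielding $\dim U_i = \dim V_i = \dim W_i = 1$ and $T_i$ of rank one.

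The main obstacle I anticipate is precisely this last step: the three simultaneous diagonalisations (for the commuting self-adjoint families generated by $U_i$, $V_i$, and $W_i$) must be reconciled against each other, and the full strength of partial associativity for all three pairings of same-space arguments is needed to make the required compatibility identities fall out.
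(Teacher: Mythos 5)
Your overall strategy---decompose into simple homogeneous ideals and simultaneously diagonalise the commuting self-adjoint family $\{M_x : x \in U_i\}$ on $V_i \oplus W_i$---matches the paper's opening moves exactly, but your endgame diverges and contains a genuine gap. The step that fails is the claim that each ``triple'' $\RR u_\lambda \oplus \RR v_\lambda \oplus \RR w_\lambda$ is an ideal. This would require the joint eigenspace $(V_i \oplus W_i)_\lambda$ to be one-dimensional, which you have not established. If $v_\lambda + w_\lambda$ and $v'_\lambda + w'_\lambda$ are two orthonormal joint eigenvectors with the same eigenvalue functional $\lambda$, then for the Riesz dual $u_\lambda$ one computes $v'_\lambda \cdot u_\lambda = \lambda(u_\lambda)\, w'_\lambda \notin \RR w_\lambda$, so the small triple is not closed under multiplication by $V_i$. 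The subspace that \emph{is} an ideal is the ``fat'' version $\RR u_\lambda \oplus V_\lambda \oplus W_\lambda$ (with $V_\lambda, W_\lambda$ the full $V$- and $W$-components of the eigenspace, and including the $-\lambda$ eigenvectors to make it homogeneous); simplicity then yields only a single eigenvalue pair and forces $\dim U_i = 1$ via triviality of $\ker\lambda$, but leaves $\dim V_i = \dim W_i$ undetermined. You also need to group $\lambda$ with $-\lambda$ before asserting orthogonality of the $u_\lambda$'s: $u_{-\lambda} = -u_\lambda$ is not orthogonal to $u_\lambda$, and your commutator computation $[L_{v_\lambda}, L_{v_{\lambda'}}](w_{\lambda'}) = \|w_{\lambda'}\|^2 (u_\lambda\mid u_{\lambda'})\, w_\lambda$ uses $w_\lambda \perp w_{\lambda'}$, which only holds for distinct eigenvalue \emph{pairs}.

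The paper sidesteps the multi-dimensional eigenspace issue entirely: after simplicity isolates a single eigenvalue pair, it picks $x \in U$ with $\lambda(x) = 1$, observes that $M_x : V \to W$ is an isometry, constructs a similar isometry $M_z : U \to V$, and pushes $T$ forward to a tensor $T' = (M_x \otimes I \otimes M_z)T \in \Sym_3(V)$. One checks that the product associated to $T'$ is commutative and fully associative, so Proposition~\ref{prop:AssSymOdeco} already handles the possibility that $V$ is high-dimensional, and $T'$ being symmetrically odeco makes $T$ odeco after pulling back through isometries. If you want to stay entirely within your framework, you can salvage it by running the simultaneous-diagonalisation argument symmetrically for the families $\{L_v : v \in V_i\}$ on $U_i \oplus W_i$ and $\{N_w : w \in W_i\}$ on $U_i \oplus V_i$, thereby forcing each of $\dim U_i, \dim V_i, \dim W_i$ to equal one---but this requires noticing that $U_i \cdot V_i \neq 0$ implies $V_i \cdot W_i \neq 0$ and $U_i \cdot W_i \neq 0$ via the compatibility $(u\cdot v \mid w) = (u \cdot w \mid v)$, a step absent from your sketch.
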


%%%%%%%%%%%%%%%%% The proof below used Hopf's theorem in a faulty
%%%%%%%%%%%%%%%%% manner (you cannot rule out dim 2 at the end!).
%%%%%%%%%%%%%%%%%I have replaced it by the old proof---JD.

\begin{proof}
By a version of Lemma~\ref{lm:Orthoplement} restricted to homogeneous
ideals, $A$ is the direct sum of pairwise orthogonal, simple homogeneous
ideals $I_i$. Accordingly, $T$ lies in $\bigoplus_i (I_i \cap U) \otimes
(I_i \cap V) \otimes (I_i \cap W)$. Thus it suffices to prove that $T$
is odeco under the additional assumption that $A$ itself is simple
and that $\cdot$ is not identically zero.

By symmetry, we may assume that $V \cdot (U+W) \neq \{0\}$.  For $u \in
U$, let $M_u:V+W \to W+V$ be multiplication with $u$. By commutativity and
partial associativity, the $M_u$, for $u\in U$, all commute. By compatibility
of $\rip$, each $M_u$ is symmetric with respect to the inner
product on $V+W$, and hence orthogonally diagonalisable. 

Consequently,
$V+W$ splits as a direct sum of pairwise orthogonal simultaneous eigenspaces
\[ (V+W)_\lambda:=\{v+w \in V+W \mid u\cdot(v+w)=\lambda(u)(w+v)
\text{ for all } u\in U\},\]
where $\lambda$ runs over $U^*$. Suppose we are given $v+w \in
(V+W)_\lambda$ and $v'+w' \in (V+W)_\mu$ with $\lambda \neq \mu$.
Then $v+w$ and $v'+w'$ are perpendicular and for each $u \in V$ we have
\[ (u|(v+w) \cdot (v'+w'))=(u\cdot(v+w)|v'+w')=
\lambda(u)(v+w|v'+w')=0, \]
hence $(v+w)\cdot(v'+w')=0$. We conclude that for each $\lambda$ the space
\[ (V+W)_\lambda \oplus \left[(V+W)_\lambda \cdot (V+W)_\lambda\right] \]
is a homogeneous ideal in $A$. By simplicity and the fact that $M_u
\neq 0$ for at least some $u$, $A$ is equal to this ideal for some
nonzero $\lambda$. Pick an $x \in U$ such that $\lambda(x)=1$, so that
$x\cdot (v+w)=w+v$ for all $v \in V,\ w \in W$. In particular, for $v,v'
\in V$ we have $(M_x v|M_x v')=(M_x^2 v|v')=(v|v')$, so that the restrictions
$M_x:V \to W$ and $M_x:W \to V$ are mutually inverse isometries.

By the same construction, we find an element $z \in W$ such that $z \cdot
(u+v)=v+u$ for all $u \in U,\ v\in V$. Let $T'$ be the image of $T$ under
the linear map $M_x \otimes I_V \otimes M_z: U \otimes V \otimes W \to V
\otimes V \otimes V$. We claim that $T'$ is symmetrically odeco. Indeed,
let $*:V\times V \to V$ denote the bilinear map associated to $T'$. We
verify the conditions from Section~\ref{sec:S3Vodeco}. First,
\[ v*v'=
(x\cdot v)\cdot(z\cdot v')=
z \cdot ((x \cdot v) \cdot v')=
z \cdot ((v' \cdot x)\cdot v)=
(z \cdot v) \cdot (v' \cdot x)=
v'*v,\]
where we have repeatedly used commutativity and partial associativity
(e.g., in the second
equality, to the elements $x\cdot v,z$ belonging to the same space
$W$).
Second, we have
\[
(v*v'|v'')=
((x \cdot v) \cdot (z \cdot v')|v'')=
((x \cdot v) | v' \cdot (z \cdot v''))=
(v | (x \cdot v') \cdot (z \cdot v''))=
(v | v'*v'').
\]
Hence $T'$ is, indeed, and element of $\Sym_3(V)$. Finally, we have
\begin{align*}
(v*v')*v''&=
(x \cdot ((x \cdot v) \cdot (z \cdot v'))) \cdot (z \cdot v'')=
x \cdot ((z \cdot v'') \cdot ((x \cdot v) \cdot (z \cdot v')))\\
&=x \cdot (((z \cdot v'') \cdot (x \cdot v)) \cdot (z \cdot v'))=
x \cdot ((v*v'') \cdot (z \cdot v'))=
(v*v'')*v',
\end{align*}
which, together with commutativity, implies associativity of $*$.
Hence $T'$ is (symmetrically) odeco by Proposition~\ref{prop:AssOdeco},
and hence so is its image $T$ under the tensor product $M_x \otimes I_V
\otimes M_z$ of linear isometries.
\end{proof}

\begin{re} \label{re:Elina}
The condition that $(x\cdot y)\cdot z= x\cdot (y\cdot z)$ for, say,
$x,z \in W$ and $y \in V$ translates into the condition that the
contraction $T *_1 T \in (V \otimes V) \otimes (W \otimes W)$ lies in
$\Sym_2(V) \otimes \Sym_2(W)$. Thus Proposition~\ref{prop:AssOdeco} implies
Theorem~\ref{thm:Elina} in the case of three factors. The case of more
factors follows from the case of three factors and flattening as in
Proposition~\ref{prop:Flatten}.
\end{re}

%%%%%%%%%%%%%%%%%%%%%%%%%%%%%%%%%%%%%%%%%%%%%%%%%%%%%%%%%%%%%
\subsection{Alternatingly odeco three-tensors}
%%%%%%%%%%%%%%%%%%%%%%%%%%%%%%%%%%%%%%%%%%%%%%%%%%%%%%%%%%%%%

In this subsection we consider a tensor $T \in \Alt_3(V),$ where $V$ is a
finite-dimensional real vector space with inner product $\rip.$ Via
$\Alt_3(V) \subseteq V^{\otimes 3} \cong (V^*)^{\otimes 2} \otimes V$
such a tensor gives rise to a bilinear map $V \times V \to V,\ (u,v)
\mapsto [u,v],$ which gives $V$ the structure of an algebra. Now,

\begin{enumerate}
\item as the permutation $(1,2)$ maps $T$ to $-T,$ we have $[u,v]=-[v,u];$ and
\item as $(2,3)$ does the same, we have $([u,v]|w)=-([u,w]|v)=([w,u]|v),$ so that the inner product is compatible with the product.
\end{enumerate}

The following lemma gives the degree-two equations from the
Main Theorem.

\begin{lm} \label{lm:AltOdecoJacobi}
If $T$ is alternatingly odeco, then $\lbr$ satisfies the Jacobi identity.
\end{lm}

\begin{proof}
Let $T=\sum_{i=1}^k u_i \wedge v_i \wedge w_i$ be an orthogonal
decomposition of $T,$ and set $V_i:=\langle u_i,v_i,w_i \rangle.$ Then
$V$ splits as the direct sum of $k$ ideals $V_i$ and one further ideal
$V_0:=(\bigoplus_{i=1}^k V_i)^\perp.$ The restriction of the
bracket to $V_0$ is zero, so it suffices to verify the Jacobi identity on each
$V_i.$ By scaling the bracket, which preserves both the Jacobi
identity and the set of alternatingly odeco tensors, we
achieve that $u_i,v_i,w_i$ can be taken of norm one.
Then we have
\[ [u_i,v_i]=w_i,\ [v_i,w_i]=u_i, \text{ and } [w_i,u_i]= v_i, \]
which we recognise as the multiplication table of $\RR^3$ with the cross
product $\times,$ isomorphic to the Lie algebra
$\liea{so}_3(\RR).$
\end{proof}

The following lemma gives the degree-four equations from the
Main Theorem.

\begin{lm} \label{lm:AltOdecoCasimir}
If $T$ is alternatingly odeco, then
$[x,[[x,y],[x,z]]]=0$ for all $x,y,z \in V$.
\end{lm}

We will refer to this identity as the {\em first cross product identity}.

\begin{proof}
By the proof of Lemma~\ref{lm:AltOdecoJacobi}, if $T$ is odeco, then $V$
splits as an orthogonal direct sum of ideals $V_1,\ldots,V_k$ that are
isomorphic, as Lie algebras with compatible inner products, to scaled
copies of $\RR^3$ with the cross product, and possibly an additional
ideal $V_0$ on which the multiplication is trivial. Thus it suffices to
prove that the lemma holds for $\RR^3$ with the cross product. But there
it is immediate: if $[[x,y],[x,z]]$ is nonzero, then the two arguments
span the plane orthogonal to $x$, hence their cross product is a scalar
multiple of $x$.
\end{proof}

We now prove the Main Theorem for alternatingly odeco three-tensors.

\begin{prop} \label{prop:JacobiCasimirAltOdeco}
Conversely, if the bracket $\lbr$ on $V$ satisfies the Jacobi identity
and the first cross product identity, then $T$ is alternatingly
odeco.
\end{prop}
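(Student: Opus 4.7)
The plan is to leverage the Jacobi identity and the compatibility of the inner product to endow $V$ with the structure of a compact Lie algebra, then to decompose it orthogonally into simple ideals, and finally to use the first cross product identity to force each non-abelian simple summand to be three-dimensional.

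First, the Jacobi identity makes $(V,[\cdot,\cdot])$ into a real Lie algebra, and compatibility combined with anti-commutativity gives the $\operatorname{ad}$-invariance $([u,v]|w)=-(v|[u,w])$, so $V$ is a compact Lie algebra. Applying Lemma~\ref{lm:Orthoplement} to the anti-commutative bracket decomposes $V$ orthogonally into simple ideals. Abelian simple ideals are necessarily one-dimensional and contribute nothing to $T$, so writing $T=\sum_i T_i$ with $T_i\in\Alt_3(V_i)$ over the non-abelian simple summands $V_i$, and noting that the first cross product identity still holds on each $V_i$ since it is an ideal, I reduce to the following claim: any non-abelian simple compact Lie algebra satisfying $[x,[[x,y],[x,z]]]=0$ for all $x,y,z$ is isomorphic, as a Lie algebra with compatible inner product, to a scaled copy of $\liea{so}_3(\RR)$ with the cross product.

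For this key step, I would pick a regular element $x$ and set $\mathfrak{t}:=\ker(\operatorname{ad}_x)$, a Cartan subalgebra. Compatibility makes $\operatorname{ad}_x$ skew-symmetric with respect to the inner product, so $\operatorname{im}(\operatorname{ad}_x)=\mathfrak{t}^{\perp}$. The first cross product identity rewrites as $[\mathfrak{t}^{\perp},\mathfrak{t}^{\perp}]\subseteq\mathfrak{t}$. Complexifying and using the root space decomposition $V_i\otimes_{\RR}\CC=\mathfrak{h}\oplus\bigoplus_\alpha\mathfrak{g}_\alpha$, this forces $[\mathfrak{g}_\alpha,\mathfrak{g}_\beta]=0$ whenever $\alpha+\beta$ is a nonzero root. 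The main obstacle is then to conclude that the rank must equal $1$: in any simple complex Lie algebra of rank $\geq 2$, two simple roots $\alpha_1,\alpha_2$ adjacent in the Dynkin diagram satisfy that $\alpha_1+\alpha_2$ is a root and $[\mathfrak{g}_{\alpha_1},\mathfrak{g}_{\alpha_2}]=\mathfrak{g}_{\alpha_1+\alpha_2}\neq 0$, contradicting the above. Hence the rank is $1$ and $V_i\cong\liea{so}_3(\RR)$, three-dimensional.

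To finish, I choose an orthonormal basis $e_1,e_2,e_3$ of $V_i$. By simplicity, compatibility, and anti-commutativity, the bracket takes the form $[e_a,e_b]=c_i\,\epsilon_{abc}\,e_c$ for some $c_i>0$ (possibly after reordering the basis), and a direct computation shows that the corresponding tensor $T_i$ equals $(c_ie_1)\wedge e_2\wedge e_3$. Summing over $i$, using that the $V_i$ are pairwise orthogonal, assembles into a full orthogonal alternating decomposition of $T$, showing that $T$ is alternatingly odeco.
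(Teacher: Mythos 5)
Your proof is correct, and it takes a genuinely different route through the key step. Both proofs begin the same way: Lemma~\ref{lm:Orthoplement} splits $V$ into pairwise orthogonal simple ideals, $T$ decomposes accordingly into $\bigoplus_i \Alt_3 V_i$ over the non-abelian summands, and each such $V_i$ is a simple compact Lie algebra by compatibility (= $\operatorname{ad}$-invariance). Where you diverge is in showing that each $V_i$ is three-dimensional. The paper appeals to the classification to say that any compact simple Lie algebra not isomorphic to $\liea{so}_3(\RR)$ complexifies to one whose Dynkin diagram has an edge, then asserts that $\liea{g}_\CC$ ``contains a copy of $\liea{sl}_3(\CC)$'' and exhibits an explicit triple in $\liea{sl}_3(\CC)$ violating the first cross product identity. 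You instead pick a regular $x$, observe that the identity $[x,[[x,y],[x,z]]]=0$ is exactly the statement $[\mathfrak{t}^\perp,\mathfrak{t}^\perp]\subseteq \mathfrak{t}$ (using that $\operatorname{ad}_x$ is skew, so $\operatorname{im}\,\operatorname{ad}_x=(\ker\operatorname{ad}_x)^\perp$), complexify to the root space decomposition, and conclude that $[\liea{g}_\alpha,\liea{g}_\beta]=0$ whenever $\alpha+\beta$ is a nonzero root---so no two roots sum to a root, forcing rank one. This is arguably cleaner: it avoids the explicit $\liea{sl}_3$ computation, and it sidesteps a small imprecision in the paper's argument (for instance $\liea{so}_5(\CC)$ has an edge in its Dynkin diagram but does not actually contain $\liea{sl}_3(\CC)$ as a subalgebra; what the paper really uses is just the existence of roots $\alpha,\beta,\alpha+\beta$ and a Cartan element with nonzero pairings, which is the same structural fact your argument isolates directly). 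Your closing computation identifying $T_i$ with $c_i\, e_1\wedge e_2\wedge e_3$ via the alternating form $([e_a,e_b]\,|\,e_c)=c_i\,\epsilon_{abc}$ is also correct and makes explicit a step that the paper leaves implicit.
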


\begin{proof}
By Lemma~\ref{lm:Orthoplement} the space $V$ splits into pairwise
orthogonal, simple ideals $V_i.$ Correspondingly, $T$ lies in $\bigoplus_i
\Alt_3 V_i,$ where the sum is over those $V_i$ where the
bracket is nonzero. These are
simple real Lie algebras equipped with a compatible inner product, hence
compact Lie algebras. Let $\liea{g}$ be one of these, so $\liea{g}$
satisfies the first cross product identity. Then so does the complex
Lie algebra $\liea{g}_\CC:= \CC \otimes \liea{g},$ which is
semisimple. For $\liea{g} \cong
\liea{so}_3(\RR),$ we have $\liea{g}_\CC \cong \liea{sl}_2(\CC),$
i.e., the Dynkin diagram of $\liea{g}_\CC$ has a single node. The
classification of simple compact Lie algebras (see, e.g.,
\cite{knapp}) shows that, if $\liea{g}$
is not isomorphic to $\liea{so}_3(\RR),$ then the Dynkin diagram of
$\liea{g}_\CC$ contains at least one edge, so that $\liea{g}_\CC$
contains a copy of $\liea{sl}_3(\CC).$ However,
this $8$-dimensional complex Lie algebra does {\em not} satisfy the
cross product identity, as for instance
\[
[E_{11}-E_{33},[[E_{11}-E_{33},E_{12}],[E_{11}-E_{33},E_{23}]]]
=
2 E_{13} \neq 0,\]
where $E_{ij}$ is the matrix with zeroes everywhere except for a $1$
on position $(i,j)$. Hence $\liea{g} \cong \liea{so}_3(\RR)$ is
three-dimensional, and $T$ is alternatingly odeco.
\end{proof}

\begin{re} \label{re:CrossNotNeeded}
The classification of simple compact Lie algebras shows that, after
$\liea{so}_3(\RR)$, the next smallest one is $\liea{su}_3(\CC)$, of
dimension $8$---recall that  $\liea{so}_4(\RR)$, of dimension $6$,
is a direct sum of two copies of $\liea{so}_3$, arising from left and
right multiplication of quaternions by norm-one quaternions. Thus our
degree-four equations from the cross product identity are not necessary
for $\dim V<8$.
\end{re}

%%%%%%%%%%%%%%%%%%%%%%%%%%%%%%%%%%%%%%%%%%%%%%%%%%%%%%%%%%%%%
\subsection{Symmetrically udeco three-tensors}
%%%%%%%%%%%%%%%%%%%%%%%%%%%%%%%%%%%%%%%%%%%%%%%%%%%%%%%%%%%%%

In the complex udeco setting, all proofs towards Main Theorem are more
complicated than in the real case. The reason for this is that the
bi-additive map $(u,v) \mapsto u \cdot v$ associated to a tensor is no
longer bi-linear. Rather, it is bi-semilinear, i.e., it satisfies $(cu)
\cdot (dv)=\overline{cd} (u \cdot v)$ for complex coefficients $c,d.$ To
appreciate why this causes trouble, consider the usual associativity
identity:
\[ (x \cdot y) \cdot z = x \cdot (y \cdot z). \]
If the product is bi-semilinear, then the left-hand side depends linearly
on both $x$ and $y$ and semilinearly on $z,$ while the right-hand
side depends linearly on $y$ and $z$ and semilinearly on $x.$  Hence,
except in trivial cases, one should not expect associativity to hold
for bi-semilinear products. This explains the need for more complex
polynomial identities (pun intended).

\vskip.03in

In this subsection, $V$ is a complex, finite-dimensional vector space
equipped with a positive-definite Hermitian inner product $\rip,$
and $T$ is an element of $\Sym_3(V).$ There is a canonical linear
isomorphism $V \to V^s,\ v \mapsto (v|\cdot),$ where $V^s$ is the space of
semilinear functions $V \to \CC.$ Through $\Sym_3(V) \subseteq V^{\otimes
3} \cong (V^s)^{\otimes 2} \otimes V,$ the tensor $T$ gives rise to
a bi-semilinear product $V \times V \to V,\ (u,v) \mapsto u \cdot
v.$ Moreover:
\begin{enumerate}
\item since $T$ is invariant under permuting the first two factors,
$\cdot$ is commutative; and
\item since $T$ it is invariant under permuting the
last two factors, we find that $(u\cdot v|w)=(u \cdot w|v).$ Note that,
in this identity, both sides are semilinear in all three vectors $u,v,w.$
\end{enumerate}

The following lemma gives the degree-three equations of the Main Theorem.

\begin{lm} \label{lm:SymUdecoAss}
If $T$ is symmetrically udeco, then for all $x,y,z,u \in V$ we have
\[ x \cdot (y \cdot (z \cdot u)) = z \cdot (y \cdot (x \cdot u))
\text{ and } (x \cdot y) \cdot (z \cdot u)=(x \cdot u) \cdot
(z \cdot y).\]
\end{lm}

We call a commutative operation $\cdot$ satisfying the identities in the
lemma {\em semi-associative}.  It is clear that any commutative and associative
operation is also semi-associative, but the converse does not hold. Note
that, since the product is bi-semilinear, both sides of the first identity
depend semilinearly on $x,z,u$ but linearly on $y,$ while both parts of
the second identity depend linearly on all of $x,y,z,u.$

\begin{proof}
Let $T=\sum_i v_i^{\otimes 3}$ be an orthogonal decomposition of $T.$
Then we have
\[ z \cdot u = \sum_i (v_i|z)(v_i|u) v_i \]
and
\[ y \cdot (z \cdot u) = \sum_i (v_i|y)(z|v_i)(u|v_i)(v_i|v_i) v_i \]
by the orthogonality of the $v_i.$ We stress that the coefficient
$(v_i|z)(v_i|u)$ has been transformed into its complex conjugate
$(z|v_i)(u|v_i).$ Next, we find
\[ x \cdot (y \cdot (z \cdot u))=\sum_i
(v_i|x)(y|v_i)(v_i|z)(v_i|u)(v_i|v_i)(v_i|v_i)v_i \]
and this expression is invariant under permuting $x,z,u$ in
any manner. This proves the first identity.

For the second identity, we compute
\[ (x \cdot y) \cdot (z \cdot u) = \sum_i (v_i|v_i)^2
(x|v_i)(y|v_i)(z|v_i)(u|v_i) v_i, \]
which is clearly invariant under permuting $x,y,z,u$ in any
manner.
\end{proof}

\begin{prop} \label{prop:AssSymUdeco}
Conversely, if $\cdot$ is semi-associative, then $T$ is symmetrically
udeco.
\end{prop}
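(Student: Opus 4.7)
The plan is to extract from the semi-associative structure a commuting family of normal $\CC$-linear operators on $V$, simultaneously diagonalise them with respect to $\rip$, and read off an orthogonal decomposition of $T$ directly from the structure constants in the resulting basis.

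First I define, for each pair $x,y\in V$, the operator $N_{x,y}\colon V\to V,\ u\mapsto x\cdot(y\cdot u)$. As the composition of two semilinear maps this is $\CC$-linear. Two properties of this family drive the argument. Using compatibility $(a\cdot b|c)=(a\cdot c|b)$ twice, together with the conjugate-symmetry of $\rip$, one obtains the adjoint relation $N_{x,y}^*=N_{y,x}$; in particular each $N_{x,y}$ is normal and the family is closed under taking adjoints. Secondly, the first semi-associativity identity in the operator form $M_xM_yM_z=M_zM_yM_x$ (where $M_x(u):=x\cdot u$), applied twice, yields
\[
N_{x,y}N_{z,w}=M_xM_yM_zM_w=M_zM_yM_xM_w=M_zM_wM_xM_y=N_{z,w}N_{x,y},
\]
so the family pairwise commutes. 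The spectral theorem for commuting normal operators then produces an orthonormal basis $e_1,\dots,e_n$ of $V$ simultaneously diagonalising every $N_{x,y}$.

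In this basis, write $e_j\cdot e_k=\sum_\ell T_{jk\ell}e_\ell$; by commutativity of $\cdot$ and compatibility of $\rip$ the array $T_{jk\ell}$ is fully symmetric in its three indices (and in fact coincides with the coordinates of $T$ itself). The crucial step is to show $T_{ijk}=0$ unless $i=j=k$. Fix $j\neq k$ and expand
\[
N_{e_k,e_j}(e_k)=e_k\cdot(e_j\cdot e_k)=\sum_{\ell,m}\overline{T_{jk\ell}}\,T_{k\ell m}\,e_m.
\]
Since $e_k$ is a simultaneous eigenvector, every coefficient with $m\neq k$ must vanish; taking $m=j$ and invoking the symmetry $T_{k\ell j}=T_{jk\ell}$ collapses the resulting identity to $\sum_\ell|T_{jk\ell}|^2=0$, so $T_{jk\ell}=0$ for all $\ell$. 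Full symmetry of $T$ then completes the claim.

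Hence $T=\sum_i T_{iii}\,e_i^{\otimes 3}$, and choosing a cube root $\mu_i\in\CC$ of $T_{iii}$ (with $\mu_i=0$ when $T_{iii}=0$) produces the orthogonal decomposition $T=\sum_i(\mu_i e_i)^{\otimes 3}$. I expect the main obstacle to be the commutativity step: the semilinearity of the $M_x$ means one cannot simply invoke pairwise commutation, and two carefully chained applications of the three-fold identity $M_xM_yM_z=M_zM_yM_x$ are required. Notably, the second semi-associativity identity of Lemma~\ref{lm:SymUdecoAss} appears to play no role in this argument; it is implied by the decomposition a posteriori.
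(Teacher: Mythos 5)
Your proof is correct, and the route is genuinely different from the paper's. The paper's argument reduces to simple ideals (Lemma~\ref{lm:Orthoplement}), shows $\ker M_x=0$ on such an ideal, twists the bi-semilinear product into a $\CC$-bilinear commutative division algebra $y*z:=M_x^{-1}(y\cdot z)$, and finally invokes the fact that a finite-dimensional commutative field extension of $\CC$ is $\CC$ itself to conclude that every simple piece is one-dimensional. You instead package the first semi-associativity identity as the operator identity $M_xM_yM_z=M_zM_yM_x$, observe that the $\CC$-linear operators $N_{x,y}=M_xM_y$ form a pairwise commuting family that is $*$-closed (via $N_{x,y}^*=N_{y,x}$, which follows from compatibility of $\rip$), simultaneously unitarily diagonalise, and read off from the structure constants $T_{jk\ell}$ in the resulting orthonormal basis that $T_{jk\ell}=0$ unless $j=k=\ell$; the identity $\sum_\ell|T_{jk\ell}|^2=0$ obtained from the eigenvector condition on $e_k$ is exactly the point where everything collapses. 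The paper's ideal-theoretic route matches its overall philosophy of relating odeco/udeco tensors to semisimple algebras, and the construction of $*$ prefigures the argument used in the ordinary udeco case; your spectral route is arguably shorter and avoids ring theory entirely, but it is specific to the symmetric situation and does not obviously transfer to the ordinary-tensor version where the analogue of $N_{x,y}$ acts between different spaces. Your closing observation that the second semi-associativity identity goes unused is consistent with the paper's own remark to that effect.
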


In fact, in the proof we will only use the first identity.  The second
identity will be used later on, for the case of ordinary udeco
three-tensors.

\begin{ex}
To see how the identities for semi-associativity, in
Lemma~\ref{lm:SymUdecoAss}, transform into equations for  symmetrically
udeco tensors we consider $2\times 2\times 2$ tensors. Let
$\{e_1,e_2\}$ be an orthonormal basis of $\mathbb{C}^2$ and we represent
a general element of $\Sym_3(\mathbb{C}^2)$  by
\begin{align*}T&=
t_{3,0} e_1\otimes e_1\otimes e_1 +
t_{2,1} (e_1\otimes e_1\otimes e_2 + e_1 \otimes e_2 \otimes e_1
+ e_2 \otimes e_1 \otimes e_1)  \\
&+t_{1,2} (e_1\otimes e_2\otimes e_2 + e_2 \otimes e_1 \otimes e_2 +
e_2 \otimes e_2 \otimes e_1) +
t_{0,3} e_2\otimes e_2\otimes e_2.
\end{align*}
Then the identities for semi-associativity are translated into two complex equations. If we separate the real and imaginary parts of these two complex equations then we get that the real algebraic variety of $2\times 2\times 2$ symmetrically udeco tensors is given by the following four real equations (note that they are invariant under conjugation): 
\begin{align*}
f_1 =& -t_{1,2}^2 \overline{t}_{1,2} + t_{0,3} t_{2,1} \overline{t}_{1,2} - t_{1,2} \overline{t}_{1,2}^2 - t_{1,2} t_{2,1} \overline{t}_{2,1} +
       t_{0,3} t_{3,0} \overline{t}_{2,1} + t_{1,2} \overline{t}_{0,3} \overline{t}_{2,1} -\\& t_{2,1} \overline{t}_{1,2} \overline{t}_{2,1} - t_{3,0} \overline{t}_{2,1}^2 -
       t_{2,1}^2 \overline{t}_{3,0} + t_{1,2} t_{3,0} \overline{t}_{3,0} + t_{2,1} \overline{t}_{0,3} \overline{t}_{3,0} + t_{3,0} \overline{t}_{1,2} \overline{t}_{3,0};\\
f_2 =& -t_{1,2}^2 \overline{t}_{1,2} + t_{0,3} t_{2,1} \overline{t}_{1,2} + t_{1,2} \overline{t}_{1,2}^2 - t_{1,2} t_{2,1} \overline{t}_{2,1} +
       t_{0,3} t_{3,0} \overline{t}_{2,1} - t_{1,2} \overline{t}_{0,3} \overline{t}_{2,1}+\\&  t_{2,1} \overline{t}_{1,2} \overline{t}_{2,1} + t_{3,0} \overline{t}_{2,1}^2 -
       t_{2,1}^2 \overline{t}_{3,0} + t_{1,2} t_{3,0} \overline{t}_{3,0} - t_{2,1} \overline{t}_{0,3} \overline{t}_{3,0} - t_{3,0} \overline{t}_{1,2} \overline{t}_{3,0};\\
f_3 =& -t_{1,2}^2 \overline{t}_{0,3} + t_{0,3} t_{2,1} \overline{t}_{0,3} - t_{1,2} t_{2,1} \overline{t}_{1,2} + t_{0,3} t_{3,0} \overline{t}_{1,2} -
       t_{0,3} \overline{t}_{1,2}^2 - t_{2,1}^2 \overline{t}_{2,1} +\\& t_{1,2} t_{3,0} \overline{t}_{2,1} + t_{0,3} \overline{t}_{0,3} \overline{t}_{2,1} -
       t_{1,2} \overline{t}_{1,2} \overline{t}_{2,1} - t_{2,1} \overline{t}_{2,1}^2 + t_{1,2} \overline{t}_{0,3} \overline{t}_{3,0} + t_{2,1} \overline{t}_{1,2} \overline{t}_{3,0};\\
f_4 =& -t_{1,2}^2 \overline{t}_{0,3} + t_{0,3} t_{2,1} \overline{t}_{0,3} - t_{1,2} t_{2,1} \overline{t}_{1,2} + t_{0,3} t_{3,0} \overline{t}_{1,2} +
       t_{0,3} \overline{t}_{1,2}^2 - t_{2,1}^2 \overline{t}_{2,1} +\\& t_{1,2} t_{3,0} \overline{t}_{2,1} - t_{0,3} \overline{t}_{0,3} \overline{t}_{2,1} +
       t_{1,2} \overline{t}_{1,2} \overline{t}_{2,1} + t_{2,1} \overline{t}_{2,1}^2 - t_{1,2} \overline{t}_{0,3} \overline{t}_{3,0} - t_{2,1} \overline{t}_{1,2} \overline{t}_{3,0}.
\end{align*}
The polynomials $f_1,f_2,f_3$ and $f_4$ generate a real codimension $2$ variety,
as expected by Proposition~\ref{prop:Dimensions}. We do not know whether
the ideal generated by these polynomials is prime or not.
\end{ex}

\begin{proof}[Proof of Proposition~\ref{prop:AssSymUdeco}.]
By Lemma~\ref{lm:Orthoplement}, $V$ is the direct sum of pairwise
orthogonal, simple ideals $V_i.$ Correspondingly, $T$ lies in
$\bigoplus_i \Sym_3(V_i).$ We want to show that those ideals on which
the multiplication is nonzero are one-dimensional. Thus we may assume
that $V$ itself is simple with nonzero product.

Then the elements $x \in V$ for which the semilinear map $M_x:V \to V,\
y \mapsto x \cdot y$ is identically zero form a proper ideal in $V,$
which is zero by simplicity. Hence for any nonzero $x \in V$ the map $M_x$
is nonzero.

Now consider, for nonzero $x \in V,$ the space $W:=\ker M_x.$ We claim
that $W$ is a proper ideal. First, $W$ also equals $\ker
M_x^2,$ because if $M_x^2 v=0,$ then $(M_x^2 v|v)=(x(xv)|v)=(xv|xv)=0,$
so $x v=0.$ We have
\[ M_{x}^2(V \cdot W)=x \cdot (x \cdot (V \cdot W))=V \cdot
(x \cdot (x \cdot W))=\{0\}. \]
Here we used semi-associativity in the second equality.
So $V \cdot W \subseteq \ker M_x^2=W,$ as claimed. Hence $W$ is zero.

Fixing any nonzero $x \in V,$ we define a new operation on $V$ by
\[ y*z:=M_x^{-1}(y \cdot z). \]
Since $M_x^{-1}$ is semilinear, $*$ is bilinear, commutative, and has
$x$ as a unit element. We claim that it is also associative.
For this we need to prove that
\[ v \cdot M_x^{-1} (z \cdot y)=z \cdot M_x^{-1} (v \cdot y)
\]
holds for all $y,z,v \in V.$ Write $y=M_x^2 y',$ so that
$x\cdot(x \cdot(z \cdot y'))=z \cdot y$ and $x \cdot (x \cdot (v \cdot
y'))=v \cdot y$ by semi-associativity. Then the equation to
be proved reads
\[ v \cdot (x \cdot (z \cdot y')) = z \cdot (x \cdot (v \cdot y')), \]
which is another instance of semi-associativity.

Furthermore, any nonzero element $y \in V$ is invertible in $(V,*)$
with inverse $M_y^{-1}(x \cdot x).$ We conclude that $(V,*,+)$ is a
finite-dimensional field extension of $\CC,$ hence equal to $\CC.$
\end{proof}

%%%%%%%%%%%%%%%%%%%%%%%%%%%%%%%%%%%%%%%%%%%%%%%%%%%%%%%%%%%%%
\subsection{Ordinary udeco three-tensors}
%%%%%%%%%%%%%%%%%%%%%%%%%%%%%%%%%%%%%%%%%%%%%%%%%%%%%%%%%%%%%

In this subsection, $U,V,W$ are three finite-dimensional complex vector spaces
equipped with Hermitian inner products $\rip,$ and $T$ is a tensor in $U
\otimes V \otimes W.$ Then $T$ gives rise to bi-semilinear maps $U \times
V \to W,$ $V \times U \to W,$ etc. Like for ordinary three-tensors in
the real case, we equip $A:=U \oplus V \oplus W$ with the bi-semilinear
product $\cdot$ arising from these maps and with the inner product which
restricts to the given inner products on $U,V,$ and $W,$ and is zero
on all other pairs. By construction:
\begin{enumerate}
	\item $(A,\cdot)$ is commutative, and
	\item the inner product is compatible.
\end{enumerate}

The following lemma gives the degree-three equations from the Main
Theorem.

\begin{lm} \label{lm:UdecoAss}
If $T$ is udeco, then
\begin{enumerate}
\item for all $u,u',u'' \in U$ and $v \in V$ we have $u \cdot (u' \cdot
(u'' \cdot v))=u'' \cdot (u' \cdot (u \cdot v));$
\item for all $u \in U,$ $v,v' \in V,$ and $w \in W$ we have
$u \cdot (v \cdot (w \cdot v'))=w \cdot (v \cdot (u \cdot
v'))$ and $(u \cdot v) \cdot (w \cdot v')=(u \cdot v') \cdot
(w \cdot v);$
\end{enumerate}
and the same relations hold with $U,V,W$ permuted in any manner.
\end{lm}

We call $\cdot$ \emph{partially semi-associative} if it satisfies these
conditions.

\begin{proof}
Let $T=\sum_i u_i \otimes v_i \otimes w_i$ be an orthogonal decomposition
of $T.$ Then we have
\begin{align*}
u'' \cdot v &= \sum_i (u_i | u'')(v_i | v) w_i, \\
u' \cdot (u'' \cdot v) &= \sum_i
(u_i|u')(u''|u_i)(v|v_i)(w_i|w_i) v_i, \text{ and}\\
u \cdot (u' \cdot (u'' \cdot v)) &= \sum_i
	(u_i|u)(u'|u_i)(u_i|u'')(v_i|v)(w_i|w_i)(v_i|v_i)w_i,
\end{align*}
which is invariant under swapping $u$ and $u''.$ The second
identity is similar. For the last identity, we have
\[ (u \cdot v) \cdot (w \cdot v')=\sum_i
(u|u_i)(v|v_i)(w_i|w_i)(w|w_i)(v'|v_i)(u_i|u_i), \]
which is invariant under swapping $v$ and $v'.$
\end{proof}

The following proposition implies the Main Theorem for
three-tensors over $\CC.$

\begin{prop} \label{prop:AssUdeco}
Conversely, if $\cdot$ is partially semi-associative, then $T$ is
udeco.
\end{prop}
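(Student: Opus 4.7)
My plan is to mirror the proof of Proposition~\ref{prop:AssOdeco} (the real ordinary case), handling the semi-linearity of the product with the same self-adjointness trick used in the symmetrically udeco case (Proposition~\ref{prop:AssSymUdeco}). Although the left multiplications $M_u : V+W \to W+V$ for $u \in U$ are only semi-linear, their squares $M_u^2 : V+W \to V+W$ are linear, self-adjoint by compatibility, and positive semi-definite; moreover, setting $u=u'$ in the first identity of Lemma~\ref{lm:UdecoAss} yields $M_u^2 M_{u''}=M_{u''}M_u^2$ for all $u,u''\in U$, so the family $\{M_u^2 : u \in U\}$ commutes.

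First, via the homogeneous analogue of Lemma~\ref{lm:Orthoplement} used in the proof of Proposition~\ref{prop:AssOdeco}, $A$ splits as an orthogonal direct sum of simple homogeneous ideals and $T$ splits accordingly, so I reduce to the case that $A$ is simple and $\cdot$ is nontrivial; by symmetry in $U,V,W$, I may assume some $M_u$ with $u\in U$ is nonzero. Next, I simultaneously orthogonally diagonalize the commuting self-adjoint family $\{M_u^2\}$, obtaining joint eigenspace decompositions $V=\bigoplus_\alpha V_\alpha$ and $W=\bigoplus_\alpha W_\alpha$ such that each semi-linear $M_u$ sends $V_\alpha$ to $W_\alpha$ and back. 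A short calculation using compatibility and the orthogonality of distinct eigenspaces of self-adjoint operators shows that $(V_\alpha+W_\alpha)\cdot(V_\beta+W_\beta)=0$ whenever $\alpha\neq\beta$, so that for each $\alpha$ the subspace $V_\alpha\oplus W_\alpha\oplus[V_\alpha\cdot W_\alpha+W_\alpha\cdot V_\alpha]$ is a homogeneous ideal of $A$; by simplicity all but one such ideal is trivial, and after rescaling I obtain $x\in U$ with $M_x^2=\mathrm{id}$ on $V+W$, i.e., $M_x:V\to W$ and $M_x:W\to V$ are mutually inverse semi-linear isometries (more precisely, conjugate-isometries). The analogous construction on $U+V$ produces $z\in W$ with the same property.

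Finally, I define a bilinear product $*$ on $V$ by $v*v':=(x\cdot v)\cdot(z\cdot v')$: the composition of two semi-linear operations with a bi-semi-linear product is linear in each factor. Repeated use of commutativity, compatibility, and partial semi-associativity---in the same spirit as the computation of $v*v'=v'*v$ and of associativity of $*$ in the real case---shows that $*$ is commutative and associative, with $z\cdot x$ (or a suitable scalar multiple) acting as a two-sided unit. The resulting finite-dimensional commutative associative $\CC$-algebra $(V,*)$ inherits simplicity from $A$ and is therefore a finite field extension of $\CC$, hence equal to $\CC$; so $\dim_\CC V=1$, and by symmetry $\dim_\CC U=\dim_\CC W=1$. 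The simple summand of $T$ is then a rank-one tensor and trivially udeco, and assembling the simple components recovers an orthogonal decomposition of $T$.

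The most delicate step will be isolating a single joint eigenspace and rescaling to obtain the semi-linear involutions $M_x$ and $M_z$. In the real case the eigenspace decomposition was packaged via a linear functional $\lambda(u)\in U^*$ coming from the honest diagonalization of $M_u$; here, because $M_u$ is only semi-linear, its individual eigenvalues are defined merely up to a unit-modulus phase, so I must work entirely at the level of $M_u^2$ and import the requisite ideal structure through partial semi-associativity rather than ordinary associativity.
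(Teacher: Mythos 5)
Your plan diverges from the paper's proof in a way that introduces a real gap. The paper does \emph{not} attempt to prove that $M_x^2$ is a scalar multiple of the identity; instead it fixes an arbitrary nonzero $x\in U$, sets $Q:=\ker M_x\subseteq V+W$, and shows by a careful case analysis (using all three partial semi-associativity identities) that $Q\oplus(Q\cdot Q)$ is a proper \emph{homogeneous} ideal; simplicity then forces $Q=0$, so $M_x$ is a bijection $V+W\to W+V$. It then defines $v*v':=(x\cdot v)\cdot(z\cdot v')$ exactly as you do, verifies commutativity and, crucially, shows directly that $(V,*)$ is a \emph{division} algebra over $\CC$ (every $M_{z\cdot v'}$ is also bijective) and finishes with Hopf's theorem. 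Associativity of $*$ is never claimed, precisely because without $M_x^2=\mathrm{id}$ the manipulations that gave associativity in the real case break down.

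The gap in your proposal is the assertion that $V_\alpha\oplus W_\alpha\oplus[V_\alpha\cdot W_\alpha+W_\alpha\cdot V_\alpha]$ is a homogeneous ideal. Your stated justification --- that $(V_\alpha+W_\alpha)\cdot(V_\beta+W_\beta)=0$ for $\alpha\neq\beta$ --- gives $U\cdot(V_\alpha\oplus W_\alpha)\subseteq V_\alpha\oplus W_\alpha$ and $(V+W)\cdot(V_\alpha\oplus W_\alpha)\subseteq V_\alpha\cdot W_\alpha$, but it does \emph{not} give closure of the $U$-component under multiplication by $V+W$: one still needs $(V+W)\cdot(V_\alpha\cdot W_\alpha)\subseteq V_\alpha\oplus W_\alpha$. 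Using the first partial semi-associativity identity one can show $M_u^2$ commutes with $M_q$ for all $q\in U$, so for $v'\in V_\beta$ and $q\in V_\alpha\cdot W_\alpha\subseteq U$ one gets $v'\cdot q\in W_\beta$; for your ideal to be closed you would need $v'\cdot q=0$ when $\beta\neq\alpha$, and neither compatibility nor the remaining identities appear to force this. In fact the analogous statement in the paper's \emph{real} proof --- that $(V+W)_\lambda\oplus[(V+W)_\lambda\cdot(V+W)_\lambda]$ is a homogeneous ideal --- already fails to be literally correct for rank-one tensors (the eigenspace $(V+W)_\lambda$ of $M_u$ is not a homogeneous subspace); what is salvageable there is the decomposition into eigenspaces of $M_u^2$, but even then the closure of the $U$-component under $V$- and $W$-multiplication requires a separate argument that is not supplied. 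Mirroring that passage therefore imports the same difficulty into the udeco case.

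Concretely: until you can prove $V_\beta\cdot(V_\alpha\cdot W_\alpha)=0$ for $\beta\neq\alpha$, you cannot conclude that $M_x^2$ is scalar, hence you cannot establish that $M_x$ and $M_z$ are (conjugate-)involutions, hence your claimed associativity of $*$ and the ``finite field extension of $\CC$'' conclusion are unsupported. The safest repair is to drop the eigenspace decomposition, follow the paper's kernel argument to get only bijectivity of $M_x$ and $M_z$, and then invoke Hopf's theorem on the commutative (not-necessarily-associative) division algebra $(V,*)$.
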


\begin{proof}
By a version of Lemma~\ref{lm:Orthoplement} for homogeneous ideals $I
\subseteq A,$ i.e., those for which $I=(I \cap U) \oplus (I \cap V)
\oplus (I \cap W),$ $A$ splits as a direct sum of nonzero, pairwise
orthogonal, homogeneous ideals $I_i$ that each do not contain proper,
nonzero homogeneous ideals, and $T$ lies in $\bigoplus_i (I_i \cap U)
\otimes (I_i \cap V) \otimes (I_i \cap W),$ where the sum is over those
$i$ on which the multiplication $\cdot$ is nontrivial. Thus we may assume
that $A$ itself is nonzero, contains no proper nonzero ideals, and has
nontrivial multiplication. We then need to prove that each of $U,V,W$
is one-dimensional.

Without loss of generality, $U \cdot V$ is a non-zero subset of $W.$
The $u \in U$ for which the multiplication $M_u: V+W \to W+V,\ (v+w)
\mapsto u\cdot w + u \cdot v$ is zero form a homogeneous, proper ideal
in $A,$ which is zero by simplicity.

Pick an $x \in U,$ and let $Q:=\ker M_x \subseteq V+W,$ so that $Q
\cdot Q \subseteq U.$ We want to prove that $Q \oplus (Q \cdot Q)$
is a proper homogeneous ideal in $A.$ First, $\ker M_x$ equals $\ker M_x^2$ because
$0=(x(xv)|v)=(xv|xv)$ implies $xv=0.$ Now $U \cdot Q \subseteq Q$ because
\[ M_x^2(U \cdot Q)=x \cdot (x \cdot (U
\cdot Q))=U \cdot (x \cdot (x \cdot Q))=\{0\} \]
by partial semi-associativity.

Next, let $R$ be the orthogonal complement of $Q$ in $V+W$. We have
$(Q \cdot R | U)=(Q \cdot U|R)=\{0\},$ so that $Q \cdot R=\{0\},$ and
therefore $(V+W) \cdot Q=(Q+R)\cdot Q = Q \cdot Q$. It remains to check
whether $V \cdot (Q\cdot Q)\subseteq Q,$ and similarly for $W$. This is
true since, for $v \in Q \cap V$ and $w \in Q \cap W,$ we have
\[ x \cdot (V \cdot (w \cdot v)) = w \cdot (V \cdot (x \cdot
v))=\{0\} \]
by partial semi-associativity. We have now proved that $Q \oplus (Q
\cdot Q)$ is a proper homogeneous ideal in $A.$ Hence $Q=0$ by simplicity.

We conclude that $M_x$ is a bijection $V+W \to W+V$ for each nonzero $x
\in U.$  Similarly, $M_z$ is a bijection $U+V \to V+U$ for each nonzero
$z \in W.$ Fixing nonzero $x \in U$ and nonzero $z \in W,$ define a new
multiplication $*$ on $V$ by
\[ v * v':=(x \cdot  v) \cdot (z \cdot v') \in W \cdot U \subseteq V. \]
This operation is commutative by the third identity in partial
associativity, and it is $\CC$-linear. Moreover, for each nonzero
$v' \in V$ and each $v'' \in V$ there is an element $v \in V$ such
that $v*v'=v'',$ namely, $M_x^{-1} M_{z \cdot v'}^{-1} v'',$ which is
well-defined since also the element $z \cdot v' \in U$ is nonzero. Thus
$(V,*)$ is a commutative division algebra over $\CC,$ and by Hopf's
theorem \cite{Hopf}, $\dim_\CC V=1.$
\end{proof}

%%%%%%%%%%%%%%%%%%%%%%%%%%%%%%%%%%%%%%%%%%%%%%%%%%%%%%%%%%%%%
\subsection{Alternatingly udeco three-tensors}
%%%%%%%%%%%%%%%%%%%%%%%%%%%%%%%%%%%%%%%%%%%%%%%%%%%%%%%%%%%%%

In this section, $V$ is a finite-dimensio\-nal complex inner product space. An alternating
tensor $T \in \Alt_3(V) \subseteq V \otimes V \otimes V \cong V^s \otimes
V^s \otimes V$ gives rise to a bi-semilinear multiplication $V \times V
\to V, (a,b) \mapsto [a,b]$ that satisfies $[a,b]=-[b,a]$ and $([a,b]|
c)=-([a,c]| b).$ Just like the multiplication did not become associative
in the symmetrically udeco case, the bracket does not satisfy the Jacobi
identity in the alternatingly udeco case. However, it does satisfy the
following {\em cross product identities}.

\begin{lm} \label{lm:AltUdecoCross}
If $T$ is alternatingly udeco, then for
all $a,b,c,d,e \in V$ we have
\begin{align*}
[a,[[a,b],[a,c]]]&=0 \text{ and}\\
[[[a,b],c],[d,e]]&=[a,[[b,[c,d]],e]]+[a,[[b,[e,c]],d]]+[b,[[a,[d,e]],c]].
\end{align*}
\end{lm}

For a pictorial representation of the second identity see
Figure~\ref{fig:cross}.

\begin{proof}
In the alternatingly udeco case, the simple, nontrivial ideals of the
algebra $(V,[.,.])$ are isomorphic, via an inner product preserving
isomorphism, to $(\CC^3,c\times),$ where $\times$ is the semilinear
extension to $\CC^3$ of the cross product on $\RR^3$ and where $c$
is a scalar. Thus it suffices to prove the two identities for this
three-dimensional algebra. Moreover, both identities are homogeneous
in the sense that their validity for some $(a,b,c,d,e)$ implies
their validity when any one of the variables is scaled by a complex
number. Indeed, for the first identity this is clear, and for the second
identity this follows since all four terms are semilinear in $a,b$ and
linear in $c,d,e.$ Hence both identities follow from their validity
for the crossproduct and general $a,b,c,d,e \in \RR^3.$
\end{proof}

The cross product identities yield real degree-four equations that
vanish on the set of alternatingly odeco three-tensors.  There are
also degree-three equations, which arise as follows. Let $\mu:V\otimes
V \to V,\ (a \otimes b) \to [a,b]$ be the semilinear multiplication,
and let, conversely, $\psi:V \to V \otimes V$ be the semilinear map
determined by $(c| [a,b])=(a \otimes b|\psi(c))$---note that both sides
are linear in $a,b,c.$ Then let $H:=\mu \circ \psi: V \to V.$ Being
the composition of two semilinear maps, this is a linear map, and it
satisfies $(Ha|b)=(\psi(a)|\psi(b))=(a|Hb).$  Hence $H$ is a positive
semidefinite Hermitian map.

\begin{lm} \label{lm:AltUdecoCasimir}
If $T$ is alternatingly udeco, then $[Hx,y]=[x,Hy]$ for all $x,y \in
V.$
\end{lm}

\begin{proof}
Let $T=\sum_i u_i \wedge v_i \wedge w_i$ be an orthogonal
decomposition of $T.$ Then we have
\begin{align*}
&[Hx,y] = [\mu(\sum_i
(w_i|x) u_i \wedge v_i - (v_i|x) u_i \wedge w_i + (u_i|x) v_i \wedge
w_i),y]\\
&= \sum_i [ 2 (w_i|x)(u_i|u_i)(v_i|v_i)w_i
+ 2 (v_i|x)(u_i|u_i)(w_i|w_i)v_i
+ 2 (u_i|x)(v_i|v_i)(w_i|w_i)u_i, y]\\
&= 2 \sum_i (
(w_i|x)(u_i|u_i)(v_i|v_i)(w_i|w_i)((u_i|y)v_i-(v_i|y)u_i)\\
&\quad \quad \ + (v_i|x)(u_i|u_i)(w_i|w_i)(v_i|v_i)((w_i|y)u_i-(u_i|y)w_i)\\
&\quad \quad \ + (u_i|x)(v_i|v_i)(w_i|w_i)(u_i|u_i)((v_i|y)w_i-(w_i|y)v_i)).
\end{align*}
Now we observe that the latter expression is skew-symmetric in $x$
and $y$, so that this equals $-[Hy,x]=[x,Hy]$.
\end{proof}

\begin{re}
For a real, compact Lie algebra $\liea{g},$ the positive semidefinite
matrix $H$ constructed above is a (negative) scalar multiple of the
Casimir element in its adjoint action \cite{knapp}; this is why we call
the identity in the lemma the {\em Casimir identity}. Complexifying
$\liea{g}$ and its invariant inner product to a semilinear algebra with
an invariant Hermitian inner product, we obtain an algebra satisfying the
degree-three equations of the lemma. Hence, since for $\dim V
\geq 8$ there exist other compact Lie algebras, these equations do not
suffice to characterise alternatingly udeco three-tensors in general,
though perhaps they do so for $\dim V \leq 7$.
\end{re}

\begin{ex}
The lemma yields cubic equations satisfied by alternatingly udeco
tensors. Here is one of these, with $V=\CC^6$ and $t_{ijk}$
the coefficient of $e_i \otimes e_j \otimes e_k$:
\begin{align*}
&t_{1, 4, 5} t_{2, 3, 4} \bar{t}_{1, 3, 5} - t_{1, 3, 4} t_{2, 4, 5} \bar{t}_{1, 3, 5} + t_{1, 2, 4} t_{3, 4, 5} \bar{t}_{1, 3, 5} + t_{1, 4, 6} t_{2, 3, 4}
\bar{t}_{1, 3, 6} \\
-&t_{1, 3, 4} t_{2, 4, 6} \bar{t}_{1, 3, 6} + t_{1, 2, 4} t_{3, 4, 6}
\bar{t}_{1, 3, 6} -
t_{1, 4, 6} t_{2, 4, 5} \bar{t}_{1, 5, 6} + t_{1, 4, 5} t_{2, 4, 6}
\bar{t}_{1, 5, 6} \\
-&t_{1, 2, 4} t_{4, 5, 6} \bar{t}_{1, 5, 6} + t_{2, 4, 6} t_{3, 4, 5}
\bar{t}_{3, 5, 6} -
t_{2, 4, 5} t_{3, 4, 6} \bar{t}_{3, 5, 6} + t_{2, 3, 4} t_{4, 5, 6}
\bar{t}_{3, 5, 6} =0.
\end{align*}
This equation was first discovered as follows: working in
$V=(\ZZ/19)[i]^6$ instead of $\CC^6$ (where it is important that $19$ is $3$
modulo $4$ so that $-1$ has no square root in $\ZZ/19$), we implemented
the Cayley transform to sample general unitary matrices and from those
construct general alternatingly udeco tensors. We sampled as many as
there are degree-three monomials in the 20 variables $t_{ijk}$ plus
the 20 variables $\bar{t}_{ijk}$ (namely, $\binom{40+2}{3}=11480$), and
evaluated these monomials on the tensors. The $280$-dimensional kernel
of this matrix over $\ZZ/19$ turned out to have a basis consisting of vectors with
entries $0,1,2,17,18.$ The natural guess for lifting these equations
to characteristic zero, respectively,
yielded equations that vanish on general alternatingly udeco tensors
in characteristic zero. A similar, but smaller computation shows that
there are no degree-two equations; here the fact that these do not exist
modulo $19$ {\em proves} that they do not exist in characteristic zero.
\end{ex}

In a Lie algebra, if $[a,b]=0,$ then the left multiplications $L_a:V
\to V$ and $L_b:V \to V$ commute. This is not true in our setting,
since the Jacobi identity does not hold, but the following statement
does hold.

\begin{lm} \label{lm:PreserveKernel}
Suppose that the bracket satisfies the second cross product identity
in Lemma~\ref{lm:AltUdecoCross}, and let $a,b,c \in V$ be such that
$[a,c]=[b,c]=0.$ Then $[[a,b],c]=0.$
\end{lm}

\begin{proof}
Compute the inner product
\[ ([[a,b],c]|[[a,b],c])=-([[a,b],[[a,b],c]]|c)=([[[a,b],c],[a,b]]|c) \]
and use the identity to expand the first factor in the last inner product as
\[
[[[a,b],c],[a,b]]=[a,[[b,[c,a]],b]]+[a,[[b,[b,c]],a]]+[b,[[a,[a,b]],c]].
\]
Now each of the terms on the right-hand side is of the form $[a,x]$
or $[b,y],$ and we have $([a,x]|c)=-([a,c],x)=0$ and similarly
$([b,x]|c)=0.$ Since the inner product is positive definite, this shows
that $[[a,b],c]=0,$ as claimed.
\end{proof}

We now prove that our equations found so far suffice.

\begin{prop} \label{prop:IdsCasimirAltUdeco}
Suppose that, conversely, $T \in \Alt_3(V)$ has the properties in
Lemmas~\ref{lm:AltUdecoCross} and~\ref{lm:AltUdecoCasimir}. Then $T$
is alternatingly udeco.
\end{prop}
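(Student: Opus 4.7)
The plan is to mirror the proof of Proposition~\ref{prop:JacobiCasimirAltOdeco}, adapted to the bi-semilinear setting. First I apply Lemma~\ref{lm:Orthoplement}, whose proof uses only compatibility and extends verbatim to bi-semilinear products, to decompose $V$ as an orthogonal direct sum of simple ideals. Correspondingly $T$ decomposes, and each identity and the Casimir condition descend to each summand. Simple ideals on which the bracket vanishes must be one-dimensional and contribute nothing to $T$, so it suffices to treat a simple $V$ with nontrivial bracket.

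On such a $V$, the Casimir identity $[Hx,y]=[x,Hy]$ forces $H=\lambda I$ for some $\lambda>0$. Since $H$ is linear, Hermitian, and positive semidefinite, it decomposes $V$ orthogonally into eigenspaces $V_\mu$. For $x\in V_\mu$, $y\in V_\nu$ with $\mu\neq\nu$, bi-semilinearity yields $\bar\mu[x,y]=[Hx,y]=[x,Hy]=\bar\nu[x,y]$, hence $[V_\mu,V_\nu]=0$. The standard compatibility-with-inner-product argument then shows each $V_\mu$ is closed under the bracket, so each is an ideal, and simplicity forces a single eigenspace. Nontriviality of the bracket gives $\lambda>0$.

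The main step is to show that a simple $V$ with $H=\lambda I$ satisfies $\dim_\CC V=3$. For unit $a\in V$, the semilinear map $L_a(b):=[a,b]$ has linear square $L_a^2$ which is Hermitian and negative semidefinite, with $\ker L_a^2=\ker L_a$ (from $(L_a^2b|b)=-|L_ab|^2$), so $V=\ker L_a\oplus\im L_a$ orthogonally; the first cross product identity in Lemma~\ref{lm:AltUdecoCross} then says $[\im L_a,\im L_a]\subseteq\ker L_a$. The key intermediate claim is that $\ker L_a=\CC a$ for every nonzero $a$. Granting this, I obtain $\dim V\leq 3$: for any orthogonal nonzero $a,c\in V$ and any unit $b\in\langle a,c\rangle^\perp$, both $a$ and $c$ lie in $b^\perp=\im L_b$, whence $[a,c]\in\ker L_b=\CC b$; when $\dim V\geq 4$ one can take two $\CC$-linearly independent such $b$'s, forcing $[a,c]=0$, and bi-semilinearity then makes the bracket vanish identically, contradicting simplicity. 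Since $\Alt_3(V)=0$ for $\dim V<3$, one concludes $\dim V=3$, so $\Alt_3(V)$ is one-dimensional and $T$ is (up to scalar) a single orthogonal wedge $e_1\wedge e_2\wedge e_3$, hence alternatingly udeco.

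The main obstacle is establishing $\ker L_a=\CC a$ for every nonzero $a$. My approach is by contradiction: suppose $b\in\ker L_a$ with $b\perp a$ and $b\neq 0$. Then $N:=\CC a+\CC b$ satisfies $[N,N]=0$ and $[V,N]\subseteq N^\perp$ (since $\im L_a,\im L_b\subseteq N^\perp$). Combining Lemma~\ref{lm:PreserveKernel} with further targeted specializations of the second cross product identity (of the type $d=a$, $e=b$ used in that lemma's proof, but now also exploiting $H=\lambda I$), I expect to construct a proper nonzero ideal containing $N$, contradicting simplicity. Working out the exact ideal and controlling how the bracket intertwines $N$ with $N^\perp$ is the delicate technical core, since the non-Lie nature of the bracket precludes a direct reduction to the real case of Proposition~\ref{prop:JacobiCasimirAltOdeco}.
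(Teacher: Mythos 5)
Your reduction to a simple $V$ with $H=\lambda I$ is sound (you just reorder the two decompositions relative to the paper, which splits along eigenspaces of $H$ first and then into simple ideals), and your deduction of $\dim_\CC V\le 3$ from the claim ``$\ker L_a=\CC a$ for every nonzero $a$'' is correct and, if available, would be a slick shortcut. The problem is that this claim is precisely the hard content of the proposition, and you have only sketched an approach to it without carrying it out. Your sentence ``I expect to construct a proper nonzero ideal containing $N$'' is an aspiration, not a proof: it is not at all clear which ideal that would be, and simply having $[N,N]=0$ and $[V,N]\subseteq N^\perp$ does not hand you one. (For instance, $N\oplus[V,N]$ need not be closed under bracketing with $V$ without substantial further work.) So the proposal has a genuine gap at the step you yourself flag as ``the delicate technical core.''

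For comparison, the paper does \emph{not} try to show $\ker L_a = \CC a$ for all $a$ in one stroke. Instead it fixes a nonzero $x$ whose $L_x$ has \emph{minimal} rank, sets $V_1:=L_xV$ and $V_0:=[V_1,V_1]$, proves $x\in V_0$ via the relation $x=Hx=\mu(\psi(x))$ and the observation that $\psi(x)\in V_1\otimes V_1$, and then uses the first cross product identity (and a polarization of it) to establish $[x,V_0]=0$, $[V,V_0]\subseteq V_1$, and $[V_0,V_0]=0$. With $V_2:=(V_0\oplus V_1)^\perp$, Lemma~\ref{lm:PreserveKernel} (applied with $c=x$) and these relations show $V_2$ is an ideal, hence zero. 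Only then does it prove $\dim V_0=1$ (by an eigenvector argument exploiting the minimality of $\dim\ker L_x$) and $\dim V_1\le 2$. In other words, your claim $\ker L_a=\CC a$ does come out of the paper's argument \emph{a posteriori}, but the paper reaches it via the $V_0\oplus V_1\oplus V_2$ analysis, not by a direct contradiction from a two-dimensional kernel. If you want to pursue your route, the missing ideal is essentially the paper's $V_2=(V_0\oplus V_1)^\perp$ built from the element of minimal rank, and you would need to reproduce most of the paper's structural lemmas to verify it is an ideal; there does not appear to be a materially shorter path.
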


\begin{proof}
If $a,b \in V$ belong to distinct eigenspaces of the Hermitian
linear map $H,$ then the property that $[Ha,b]=[a,Hb]$ implies
that $[a,b]=0.$ Moreover, a fixed eigenspace of $H$ is closed under
multiplication, as for $a,b$ in the eigenspace with eigenvalue $\lambda$
and $c$ in the eigenspace with eigenvalue $\mu \neq \lambda,$ we have
\[
\overline{\lambda}([a,b]|c)=([Ha,b]|c)=-([Ha,c]|b)=-\overline{\mu}([a,c]|b)
=\overline{\mu}([a,b]|c), \]
and hence $([a,b]|c)=0.$ Thus the eigenspaces of $H$ are ideals. We may
replace $V$ by one of these, so that $H$ becomes a scalar. If the scalar is
zero, then $T$ is zero and we are done, so we assume that it is nonzero,
in which case we can scale $T$ (even by a positive real number) to
achieve that $H=1.$

Furthermore, by compatibility of the inner product and
Lemma~\ref{lm:Orthoplement}, $V$ splits further as a direct sum of
simple ideals. So to prove the proposition, in addition to $H=1,$ we
may assume that $V$ is a simple algebra and that the multiplication
is not identically zero; in this case it suffices to prove that $V$
is three-dimensional. Let $x \in V$ be a non-zero element such that
the semi-linear left multiplication $L_x:V \to V$ has minimal possible
rank. If its rank is zero, then $\langle x\rangle$ is an ideal, contrary to
the assumptions. Hence $V_1:=L_x V$ is a nonzero space, and we set
$V_0:=[V_1,V_1],$ the linear span of all products of two elements from
$V_1.$ We claim that $x \in V_0.$ For this, we note that $V_1^\perp=\ker
L_x$ and compute
\[ (\psi(x)|V_1^\perp \otimes V)=([V_1^\perp,V]|x)=([\ker
L_x,x]|V)=\{0\}. \]
Similarly, we find that $(\psi(x)|V \otimes V_1^\perp)=\{0\},$ so
$\psi(x) \in V_1 \otimes V_1$ and therefore
\[ x=Hx=\mu(\psi(x)) \in [V_1,V_1]=V_0, \]
as claimed.

Next, by the first cross product identity in
Lemma~\ref{lm:AltUdecoCross},
we find that $[x,V_0]=\{0\}.$ This implies that
$(V_0|V_1)=(V_0|[x,V])=([x,V_0]|V)=\{0\},$ so $V_0 \perp V_1.$
Furthermore, by substituting $x+s$ for $x$ in that same identity and
taking the part quadratic in $x,$ we find the identity
\[ [s,[[x,a],[x,b]]]+[x,[[s,a],[x,b]]]+[x,[[x,a],[s,b]]]=0. \]
A general element of $[V,V_0]$ is a linear combination of terms of the
left-most shape in this identity, hence the identity shows that $[V,V_0]
\subseteq V_1.$ Moreover, substituting for $s$ an element $[[x,c],[x,d]]
\in V_0$ we find that the last two terms are zero, since $[s,a] \in
V_1$ and $[x,[V_1,V_1]]=\{0\}.$ Hence the first term is also zero,
which shows that $[V_0,V_0]=\{0\}.$

Now let $V_2$ be the orthogonal complement $(V_0 \oplus V_1)^\perp,$ so
that $V$ decomposes orthogonally as $V_0 \oplus V_1 \oplus V_2.$ We claim
that $V_2$ is an ideal. First, we have $([V_0,V_2]|V)=(V_2|[V_0,V])
\subseteq (V_2|V_1)=\{0\},$ so $[V_0,V_2]=\{0\}.$ By the first
paragraph of the proof, $x$ is contained in $V_0,$ hence in particular
$[x,V_2]=0,$ so that $\ker L_x$ contains $V_0 \oplus V_2.$ For
dimension reasons, equality holds: $\ker L_x=V_0 \oplus V_2.$ Now
Lemma~\ref{lm:PreserveKernel} applied with $c=x$ yields that $\ker L_x$
is closed under multiplication, so in particular $[V_2,V_2]
\subseteq V_0 \oplus V_2.$ Since $([V_2,V_2]|V_0)=\{0\},$ we have
$[V_2,V_2] \subseteq V_2.$ Furthermore, we have
\[ ([V_1,V_2]|V_0 \oplus V_1)=(V_2|V_1 \oplus V_0)=\{0\}, \]
so that $[V_1,V_2] \subseteq V_2.$ This concludes the proof of the claim
that $V_2$ is an ideal. By simplicity of $V,$ $V_2=\{0\}$ and hence
$V=V_0 \oplus V_1.$

Now consider any $y \in V_0 \setminus \{0\}.$ Then $\ker L_y \supseteq V_0 \oplus V_2,$
and hence equality holds by maximality of $\dim \ker L_x.$ But we can show
more: let $v \in V_1$ be an eigenvector of the map $(L_x|_{V_1})^{-1}
(L_y|_{V_1})$ (which is linear since it is the composition of
two semilinear maps), say with eigenvalue $\lambda.$ Then we have
$[y,v]=[x,\lambda v]=[\lambda x,v].$  This means that the element
$z:=y-\lambda x \in V_0$ has $\ker L_z \supseteq V_0 \oplus V_2,$ but
also $v \in \ker L_x.$ Hence the kernel of $L_z$ is strictly larger
than that of $L_x,$ and therefore $z=0.$ We conclude that $y=\lambda x,$
and hence $V_0$ is one-dimensional.

Finally, consider a nonzero element $z \in V_1.$ From $[z,V_1]\subseteq
V_0=\langle x \rangle$ we find that $L_z V$ is contained in $\langle x,[z,x]
\rangle_\CC,$ i.e., $L_z$ has rank at most two. Hence, by minimality,
the same holds for $L_x.$ This means that $\dim V_1 \leq 2,$ and hence
$\dim V=\dim (V_0 \oplus V_1) \leq 3.$ Since $T$ is nonzero, we find
$\dim V=3,$ as desired.
\end{proof}

%%%%%%%%%%%%%%%%%%%%%%%%%%%%%%%%%%%%%%%%%%%%%%%%%%%%%%%%%%%%%
\section{Higher-order tensors} \label{sec:HigherOrder}
%%%%%%%%%%%%%%%%%%%%%%%%%%%%%%%%%%%%%%%%%%%%%%%%%%%%%%%%%%%%%

In this section, building on the case of order three, we prove the Main
Theorem for tensors of arbitrary order.

%%%%%%%%%%%%%%%%%%%%%%%%%%%%%%%%%%%%%%%%%%%%%%%%%%%%%%%%%%%%%
\subsection{Ordinary tensors}
%%%%%%%%%%%%%%%%%%%%%%%%%%%%%%%%%%%%%%%%%%%%%%%%%%%%%%%%%%%%%

Let $V_1,\ldots,V_d$ be finite dimensional inner product spaces over $K
\in \{\RR,\CC\}.$ The key observation is the following.  Let $J_1 \cup
\cdots \cup J_e=\{1,\ldots,d\}$ be a partition of $\{1,\ldots,d\}.$ Then
the natural {\em flattening} map
\[ V_1 \otimes \cdots \otimes V_d \to (\bigotimes_{j \in
J_1} V_j) \otimes \cdots \otimes (\bigotimes_{j \in J_e} V_j) \]
sends the set of order-$d$ odeco/udeco tensors into the set of
order-$e$ odeco/udeco
tensors, where the inner product on each factor $\bigotimes_{j \in J_\ell}
V_j$ is the one induced from the inner products on the factors. The
following proposition gives a strong converse to this observation.

\begin{prop} \label{prop:Flatten}
Let $T \in V_1 \otimes \cdots \otimes V_d$ be a tensor, where $d \geq
4.$ Suppose that the flattenings of $T$ with respect to the three
partitions
\begin{itemize}
\item[(i)] $\{1\},\ldots,\{d-3\},\{d-2\},\{d-1,d\},$
\item[(ii)] $\{1\},\ldots,\{d-3\},\{d-2,d-1\},\{d\},$ and
\item[(iii)] $\{1\},\ldots,\{d-3\},\{d-2,d\},\{d-1\}$
\end{itemize}
are all odeco/udeco. Then so is $T.$
\end{prop}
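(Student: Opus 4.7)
The plan is to further flatten each of the three odeco decompositions (i), (ii), (iii) by merging the last three tensor modes $\{d-2,d-1,d\}$ into one composite factor, viewing $T$ as an order-$(d-2)$ tensor in $V_1 \otimes \cdots \otimes V_{d-3} \otimes (V_{d-2} \otimes V_{d-1} \otimes V_d)$. The odeco structure persists under this coarsening: for example, from (i) the $\ell$-th ``tail'' $\tilde{w}^{(1)}_\ell = v^{(1)}_{\ell, d-2} \otimes w^{(1)}_\ell$ automatically has norm equal to the product of the norms of its two pieces and the tails remain pairwise orthogonal in $V_{d-2} \otimes V_{d-1} \otimes V_d$. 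The goal is then to combine the information in the three coarsened decompositions to force each tail to be a fully decomposable rank-one three-tensor whose three factors are pairwise orthogonal across $\ell$.

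For $d \geq 5$, the new order satisfies $d-2 \geq 3$, so uniqueness of odeco decomposition (Proposition~\ref{prop:Unique}) forces the three order-$(d-2)$ decompositions of $T$ obtained from (i), (ii), (iii) to coincide term by term, giving a common expression
\[ T = \sum_{\ell} v_{\ell, 1} \otimes \cdots \otimes v_{\ell, d-3} \otimes \tilde{w}_\ell \]
with $\{\tilde{w}_\ell\}$ pairwise orthogonal in $V_{d-2} \otimes V_{d-1} \otimes V_d$. Reading the tail off of the three finer decompositions shows that each $\tilde{w}_\ell$ has rank one in each of its three 2-flattenings $V_{d-2} \otimes (V_{d-1} \otimes V_d)$, $(V_{d-2} \otimes V_{d-1}) \otimes V_d$, and $V_{d-1} \otimes (V_{d-2} \otimes V_d)$, so by the elementary fact that two such rank-one flattenings already make a three-tensor decomposable, $\tilde{w}_\ell = x_\ell \otimes y_\ell \otimes z_\ell$. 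Moreover $x_\ell \propto v^{(1)}_{\ell, d-2}$, $y_\ell \propto v^{(3)}_{\ell, d-1}$, and $z_\ell \propto v^{(2)}_{\ell, d}$, so each family is pairwise orthogonal, and $T$ is odeco.

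The case $d = 4$ is more delicate since the same coarsening produces only an order-two tensor, for which SVD is not unique. Here I argue directly from the three order-three odeco decompositions. Starting from (i), write $T = \sum_i a_i \otimes b_i \otimes w^{(1)}_i$ with $w^{(1)}_i \in V_3 \otimes V_4$, and contract $T$ against a vector $c''_m \in V_3$ from (iii)'s decomposition $T = \sum_j a''_j \otimes w^{(3)}_j \otimes c''_j$. The result is on one hand the rank-one element $\|c''_m\|^2 a''_m \otimes w^{(3)}_m \in V_1 \otimes (V_2 \otimes V_4)$, and on the other $\sum_i a_i \otimes b_i \otimes \pi_{c''_m}(w^{(1)}_i)$, where $\pi_{c''_m}$ is the partial pairing in $V_3$. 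Pairing both sides with $a_j$ and using orthogonality of $\{a_i\}$ and $\{b_i\}$, one deduces that for each $m$ with $w^{(3)}_m \neq 0$ exactly one $j = j(m)$ has $(a''_m | a_j) \neq 0$, so $a''_m \propto a_{j(m)}$, and $w^{(1)}_{j(m)}$ is supported on $c''_m \otimes V_4$, hence is rank one. This establishes a bijection between the nontrivial indices and yields $w^{(1)}_j = c_j \otimes d_j$ with $c_j \propto c''_{m(j)}$, so $\{c_j\}$ inherits pairwise orthogonality from $\{c''_m\}$; a symmetric argument, contracting against $c'_m \in V_4$ from (ii), gives pairwise orthogonality of $\{d_j\}$, and $T$ is odeco.

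The main obstacle is the index bookkeeping in the $d = 4$ case: without uniqueness of the coarse decomposition, one has to establish the bijection between the labels coming from (i), (ii), (iii) by hand, using only the orthogonality relations and rank-one constraints. In the $d \geq 5$ case this step is bypassed entirely by Proposition~\ref{prop:Unique}, reducing everything to the purely algebraic observation that a three-tensor whose two 2-flattenings are rank one is itself decomposable. The udeco variant requires no modification: uniqueness holds over $\CC$ as well, and all contractions in the $d = 4$ argument are semilinear only in ways that do not affect the orthogonality and rank-one deductions.
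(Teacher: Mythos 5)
Your proof plan is correct, but it takes a partly different route from the paper, so a comparison is worthwhile.

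For $d \geq 5$ your approach---coarsening the last three modes into one factor, observing that each of (i), (ii), (iii) then yields an odeco decomposition of the resulting order-$(d-2)$ tensor, and invoking Proposition~\ref{prop:Unique} (which applies since $d - 2 \geq 3$) to force the three coarsened decompositions to coincide---is a genuinely cleaner argument than the paper's. Once the tails agree, each tail has rank-one flattenings in two (hence all three) of its matrix flattenings, and the elementary fact that two rank-one flattenings force a three-tensor to be a product tensor closes the argument: the three rank-one factors of each tail are proportional to $v^{(1)}_{\ell,d-2}$, $v^{(3)}_{\ell,d-1}$, and $v^{(2)}_{\ell,d}$ respectively, and each of these families is pairwise orthogonal by hypothesis. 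The paper instead runs a single contraction argument uniformly for all $d \geq 4$: it contracts $T$ against the rank-one tensor $T_i$ occupying the first $d-3$ slots of the $i$-th term of decomposition (i), shows the surviving terms of decomposition (ii) with $(T'_\ell|T_i) \neq 0$ have $B_\ell$ of rank one with first factor $u_i$, counts to get a bijection, and then repeats with (ii) and (iii) to control the remaining factor. Your uniqueness shortcut buys a simpler proof at the cost of having to fall back to a hands-on argument precisely at $d = 4$, where the coarsening only produces a matrix and Proposition~\ref{prop:Unique} is unavailable.

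Your separate $d = 4$ argument is essentially a single-vector version of the paper's contraction scheme and is correct, but a couple of steps are compressed and deserve to be spelled out. The claim $a''_m \propto a_{j(m)}$ needs, in addition to the observation that at most one $j$ has $(a''_m | a_j) \neq 0$, a pairing of the identity $\|c''_m\|^2\, a''_m \otimes w^{(3)}_m = \sum_i a_i \otimes b_i \otimes \pi_{c''_m}(w^{(1)}_i)$ against a vector orthogonal to all $a_j$ (which shows $a''_m$ lies in their span). The claim that $w^{(1)}_{j(m)}$ is supported on $c''_m \otimes V_4$ likewise needs both the vanishing $\pi_{c''_{m'}}(w^{(1)}_{j(m)}) = 0$ for $m' \neq m$ (from $(a''_{m'}|a_{j(m)}) = 0$) and $\pi_c(w^{(1)}_{j(m)}) = 0$ for $c$ orthogonal to every $c''_m$ (from contracting $T$ with such a $c$ and using (iii)). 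Finally, the map $m \mapsto j(m)$ is injective because the $a''_m$ are pairwise orthogonal, and surjective by symmetry of roles; stating this explicitly is needed to conclude $r = k$. With these details filled in, your argument establishes the proposition in both the odeco and udeco cases.
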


The lower bound of $4$ in this proposition is essential, because any
flattening of a three-tensor is a matrix and hence odeco, but as we have
seen in Section~\ref{sec:OrderThree} not every three-tensor is odeco.

\begin{proof}
As the first two flattenings are odeco, we have orthogonal decompositions
\[ T=\sum_{i=1}^k T_i \otimes u_i \otimes A_i=
\sum_{\ell=1}^r T'_\ell \otimes B_\ell \otimes w_\ell
\]
where $A_1,\ldots,A_k \in V_{d-1} \otimes V_{d}$ are pairwise orthogonal
and nonzero, and so are $u_1,\ldots,u_k \in V_{d-2}$, and the $T_i$
are of the form $z_{i1} \otimes \cdots \otimes z_{i(d-3)}$ where for
each $j$ the $z_{ij},\ i=1,\ldots$ are pairwise orthogonal and nonzero.
Similarly for the factors in the second expression. Contracting $T$
with $T_i$ in the first $d-3$ factors yields a single term on the left
(here we use that $d>3$):
\[ (T_i|T_i) u_i \otimes A_i = \sum_{\ell=1}^r (T'_\ell|T_i)
B_\ell \otimes w_\ell. \]
For an index $\ell$ such that $(T'_\ell|T_i)$ is nonzero, by contracting
with $w_\ell$ we find that $B_\ell$ is of rank one and, more specifically,
of the form $u_i \otimes v_\ell$ with $v_\ell \in V_{d-1}$. There is at
least one such index, since the left-hand side is nonzero.  Moreover,
since the $u_i$ are linearly independent for distinct $i$, we find
that the set of $\ell$ with $(T'_\ell|T_i) \neq 0$ is disjoint from the
set defined similarly for another value of $i$. Hence, $r \geq k$. By
swapping the roles of the two decompositions we also find the opposite
equality, so that $r=k$, and after relabelling we find $B_i=u_i \otimes
v_i$ for $i=1,\ldots,k$ and certain nonzero vectors $v_i$. Hence we find
\[ T=\sum_{i=1}^k T_i' \otimes u_i \otimes v_i \otimes w_i, \]
where we do not yet know whether the $v_i$ are pairwise perpendicular.
However, applying the same reasoning to the second and third
decompositions in the lemma, we obtain another decomposition
\[ T=\sum_{i=1}^k T_i' \otimes u_i' \otimes v_i' \otimes w_i, \]
where we do know that the $v_i'$ are pairwise perpendicular (but not
that the $u_i'$ are). Contracting with $T_i'$ we find that, in fact,
both decompositions are equal and the $v_i$ are pairwise perpendicular,
as required.
\end{proof}

\begin{proof}[Proof of the Main Theorem for ordinary tensors.]
It follows from Lemma~\ref{lm:OdecoAss} and Proposition~\ref{prop:AssOdeco},
that ordinary odeco tensors of order three are characterised by
degree-two equations. Similarly, by Lemma~\ref{lm:UdecoAss} and
Proposition~\ref{prop:AssUdeco}, ordinary udeco tensors of order three are
characterised by degree-three equations. By Proposition~\ref{prop:Flatten}
and the remarks preceding it, a higher-order tensor is odeco (udeco)
if and only if certain of its flattenings are odeco (udeco). Thus the
equations characterising lower-order odeco (udeco) tensors pull back,
along linear maps, to equations characterising higher-order odeco
(udeco) tensors.
\end{proof}

%%%%%%%%%%%%%%%%%%%%%%%%%%%%%%%%%%%%%%%%%%%%%%%%%%%%%%%%%%%%%
\subsection{Symmetric tensors}
%%%%%%%%%%%%%%%%%%%%%%%%%%%%%%%%%%%%%%%%%%%%%%%%%%%%%%%%%%%%%

In this section, $V$ is a finite-dimension vector space over $K=\RR$
or $\CC.$

\begin{prop}\label{prop:SymAsOrdinary}
For $d  \geq 3,$ a tensor $T \in \Sym_d(V)$ is symmetrically odeco (udeco)
if and only if it is odeco (udeco) when considered as an ordinary tensor
in $V^{\otimes d}.$
\end{prop}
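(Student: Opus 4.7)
The plan is to derive the proposition from the uniqueness statement of Proposition~\ref{prop:Unique}. The ``if'' direction is immediate: a symmetrically odeco/udeco expression $T=\sum_i \pm v_i^{\otimes d}$ is already an ordinary orthogonal decomposition with the same vector $v_i$ (with a possible sign absorbed into one factor) in every tensor slot. So the real content is the converse.

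For the converse, I would start with an arbitrary ordinary orthogonal decomposition $T=\sum_{i=1}^k v_{i1}\otimes\cdots\otimes v_{id}$ and exploit the fact that $T\in\Sym_d(V)$ is fixed by every transposition of tensor positions. Applying such a transposition $(j,j')$ to the decomposition produces another orthogonal decomposition of the same tensor $T$, since the orthogonality conditions at each position are preserved under the swap. By uniqueness of orthogonal decompositions for $d\geq 3$, the rank-one terms match up: there is a permutation $\sigma=\sigma_{j,j'}$ of $\{1,\ldots,k\}$ and scalars $c_{i,l}$ with $\prod_l c_{i,l}=1$ such that $v_{i,l}=c_{i,l}\,v_{\sigma(i),l}$ for $l\notin\{j,j'\}$, while at positions $j$ and $j'$ the factors are swapped up to the corresponding scalars.

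The key step then uses $d\geq 3$ to pick some position $l\notin\{j,j'\}$. At that position the vectors $v_{1,l},\ldots,v_{k,l}$ are pairwise orthogonal and nonzero, hence no two of them are proportional; the relation $v_{i,l}=c_{i,l}\,v_{\sigma(i),l}$ forces $\sigma$ to be the identity and $c_{i,l}=1$. The transposition identity at positions $j$ and $j'$ then collapses to $v_{i,j'}=c_{i,j}\,v_{i,j}$, i.e.\ the two factors in slot $j$ and $j'$ of the $i$-th term are proportional. Running this for every pair $(j,j')$ yields, for each $i$, a single vector $v_i\in V$ and scalars $\lambda_{i,j}\in K$ with $v_{i,j}=\lambda_{i,j}v_i$; pairwise orthogonality at any one position carries over to pairwise orthogonality of $v_1,\ldots,v_k$.

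To finish, I would collect these scalars into $\mu_i:=\prod_j\lambda_{i,j}$, obtaining $T=\sum_i \mu_i\,v_i^{\otimes d}$, and absorb the $\mu_i$ into the $v_i$ via a $d$-th root: a complex root works when $K=\CC$, and a real root works when $K=\RR$ with $d$ odd; when $K=\RR$ and $d$ is even, one retains the sign of $\mu_i$ in front of $(|\mu_i|^{1/d}v_i)^{\otimes d}$, matching the $\pm$ allowed in Definition~\ref{de:SymOdeco}. The main obstacle, and the step that cleanly distinguishes this situation from the $d=2$ case (where the statement in fact fails for tensors with coinciding singular values), is securing the triviality of $\sigma_{j,j'}$; this rests entirely on the existence of a spectator position $l\notin\{j,j'\}$, which is exactly where the hypothesis $d\geq 3$ enters.
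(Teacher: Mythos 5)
Your proposal is correct and follows essentially the same route as the paper: both directions are handled by the uniqueness of the ordinary orthogonal decomposition (Proposition~\ref{prop:Unique}), with the symmetry of $T$ under transpositions forcing the matching permutation of terms to be trivial via a spectator slot $l\notin\{j,j'\}$ — exactly the point where $d\geq 3$ enters in the paper's combinatorial claim as well. The paper merely packages this last step more abstractly (as a statement about an element of $S_k^d$ acted on by $S_k\times S_d$, using only the transpositions $(1,j)$), whereas you argue directly with proportionality of the orthogonal vectors; the content is the same.
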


\begin{proof}
The ``only
if'' direction is immediate, since a symmetric orthogonal decomposition is
{\em a fortiori} an ordinary orthogonal decomposition. For the converse,
consider an orthogonal decomposition
\[ T=\sum_{i=1}^k v_{i1} \otimes \cdots \otimes v_{id}, \]
where the $v_{ij}$ are nonzero vectors, pairwise perpendicular for fixed
$j.$ Since $T$ is symmetric, we have
\[ T=\sum_i v_{i\pi(1)} \otimes \cdots \otimes v_{i\pi(d)} \]
for each $\pi \in S_d.$ By uniqueness of the decomposition
(Proposition~\ref{prop:Unique}), the terms in this latter decomposition
are the same, up to a permutation, as the terms in the original
decomposition. In particular, the unordered cardinality-$k$ sets
of projective points $Q_j:=\{[v_{1j}],\ldots,[v_{kj}]\} \subseteq
\PP V$ are identical for all $j=1,\ldots,d.$ Consider the integer $k
\times d$-matrix $A$ with entries in $[k]:=\{1,\ldots,k\}$ determined by
$a_{ij}=m$ if $[v_{ij}]=[v_{m1}].$ This matrix has all integers $1,\ldots,k$
in each column, in increasing order in the first column, and
furthermore has the property that for each $d \times d$-permutation
matrix $\pi$ there exists a $k \times k$-permutation matrix $\sigma$ such
that $\sigma A=A \pi.$ To conclude the proof we only need to prove the following claim, namely
that, for $d \geq 3,$ the only such $(k \times d)$-matrix is the
matrix whose $i$-th row consists entirely of copies of $i.$

\emph{Claim.}
%\begin{lem}\label{lem:combinatorial}
Let $k\geq 1$ and $d\geq 3$ be natural numbers. Let $S_k$ act on $S_k^d$
diagonally from the left by left multiplication and let $S_d$ act on
$S_k^d$ from the right by permuting the terms. Consider an element
\[A:=(\mathrm{id}, \tau_2, \ldots, \tau_{d}) \in S_k^d,\]
where $\mathrm{id}$ is the identity permutation.
Suppose that for each $\pi \in S_d$ there exists a
$\sigma \in S_k$ such
that $\sigma A=A \pi.$ Then $A=(\mathrm{id},\ldots, \mathrm{id}).$
%\end{lem}

%\begin{proof}
\emph{Proof of claim.} For $j \in \{2,\ldots,d\}$ pick $\pi_j=(1,j)$
to be the transposition switching $1$ and $j$. By the property imposed
on $A$ there exists a $\sigma_j$ such that $\sigma_j A= A \pi_j.$
In particular, $(A\pi_j)_1=\tau_j$ equals $(\sigma_j A)_1=\sigma_j.$
So $\tau_j=\sigma_j$ for all $j \in \{2,\ldots,d\}$. Since $d\geq 3,$
one can pick an index $l$ which is fixed by $\pi_j,$ so that
$\tau_l=(\sigma_j A)_l=\sigma_j \tau_l.$ So then
$\sigma_j=\mathrm{id}=\tau_j.$ This concludes the proof of the claim, and thus that of Proposition ~\ref{prop:SymAsOrdinary}.%\end{proof}
\end{proof}

\begin{proof}[Proof of the Main Theorem for symmetric tensors.]
By the preceding proposition, the equations for odeco tensors in
$V \otimes \cdots \otimes V$ pull back to equations characterising
symmetrically odeco tensors in $\Sym_d V$ via the inclusion of the
latter space into the former. Thus the Main Theorem for symmetric
tensors follows from the Main Theorem for ordinary tensors, proved in
the previous subsection.
\end{proof}

\begin{re}
The proof of the Main Theorem in Section~\ref{sec:OrderThree} for
ordinary odeco three-tensors relies on the proof for symmetrically
odeco three-tensors, so the proof above does not render that proof
superfluous. On the other hand, the proof for ordinary {\em udeco}
three-tensors does not rely on that for symmetrically udeco three-tensors,
so in view of the proof above the latter could have been left out.
We have decided to retain it for completenes.
\end{re}

\begin{re} \label{re:AlternatingNotOdeco}
The argument in the proposition also implies that an 
odeco/udeco
tensor in $V^{\otimes d} \setminus \{0\}$ with $d \geq 3$ cannot be alternating: permuting
tensor factors with a transposition must leave the decomposition intact
up to a sign and a permutation of terms, but then the claim shows that in
each term all vectors are equal, hence their alternating product is zero.
\end{re}

%%%%%%%%%%%%%%%%%%%%%%%%%%%%%%%%%%%%%%%%%%%%%%%%%%%%%%%%%%%%%
\subsection{Alternating tensors}\label{subsec:HigherOrderAlt}
%%%%%%%%%%%%%%%%%%%%%%%%%%%%%%%%%%%%%%%%%%%%%%%%%%%%%%%%%%%%%

In this section we prove that an alternating tensor of order at least four
is alternatingly odeco/udeco if and only if all its contractions with
a vector are. Thus, let $V$ be a vector space over $K \in \{\RR,\CC\}$
and consider an orthogonal decomposition
\begin{equation}  \label{eq:altT}
T=\sum_{i=1}^k \lambda_i v_{i1} \wedge \cdots \wedge v_{id}
\end{equation}
of an alternatingly odeco tensor $T \in \Alt_d V,$ where
$v_{11},\ldots,v_{kd}$ form an orthonormal set of vectors in $V$ and where
$\lambda_i \in K.$ The following lemmas are straightforward exercises in
differential geometry, and we omit their proofs.

\begin{lm} \label{lm:Tangent}
Suppose that $K=\RR.$ Let $d \geq 3$ and $dk \leq n:=\dim V.$ The set
$X$ of alternatingly odeco tensors in $\Alt_d V$ with exactly $k$ terms
in their orthogonal decomposition is a smooth manifold of dimension
$k+\frac{1}{2}dk(2n-(k+1)d)$ whose tangent space at a point $T$ is the direct
sum of the following spaces:
\begin{enumerate}
\item $\bigoplus_{i=1}^k (\Alt_{d-1} V_i) \wedge V_0$ where
$V_i=\langle v_{i1},\ldots,v_{id} \rangle$ and $V_0=(V_1
\oplus \cdots \oplus V_k)^\perp;$
\item $\bigoplus_{i=1}^k \Alt_d V_i;$ and
\item $\langle \lambda_i (v_{i1} \wedge \cdots \wedge
v_{ml} \wedge \cdots \wedge
v_{id}) - \lambda_m (v_{m1} \wedge \cdots \wedge
v_{ij} \wedge \cdots \wedge v_{md})
\rangle \mid 1 \leq j,l \leq d \text{ and } i \neq m
\rangle,$ where $v_{ml}$ replaces $v_{ij}$ in the first
term and vice versa in the second term.
\end{enumerate}
\end{lm}

The three summands are obtained as follows: $X$ is
the image of the Cartesian product of the manifold of $k \cdot d$-tuples
of orthonormal vectors with $(\RR \setminus \{0\})^k$ via
\[ \phi:((v_{ij})_{(i,j) \in [k] \times [d]}, \lambda) \mapsto
\sum_i \lambda_i v_{i1} \wedge \cdots \wedge v_{id}. \]
Replacing a $v_{ij}$ by a $v_{ij}+\epsilon v_0$ with $v_0
\in V_0$ yields the first summand. Replacing $\lambda_i$ by
$\lambda_i+\epsilon$ yields the second summand, and
infinitesimally rotating $(v_{ij},v_{ml})$ into
$(v_{ij}+\epsilon v_{ml}, v_{ml}-\epsilon v_{ij})$ yields
the last summand. The complex analogue of Lemma \ref{lm:Tangent} is the following.

\begin{lm} \label{lm:TangentC}
Suppose that $K=\CC.$ Let $d \geq 3$ and $2k \leq n:=\dim_\CC V.$ The
set $X$ of alternatingly udeco tensors in $\Alt_d V$ with exactly $k$
terms in their orthogonal decomposition is a smooth manifold of dimension
$2k+dk(2n-(k+1)d)$ whose tangent space at $T$ is the direct sum of the
following spaces:
\begin{enumerate}
\item the complex space $\bigoplus_{i=1}^k (\Alt_{d-1} V_i) \wedge V_0$
where $V_i=\langle v_{i1},\ldots,v_{id} \rangle$ and $V_0=(V_1 \oplus
\cdots \oplus V_k)^\perp;$
\item the complex space $\bigoplus_{i=1}^k \Alt_d V_i;$
\item the real space $\langle \lambda_i (v_{i1} \wedge \cdots \wedge
v_{ml} \wedge \cdots \wedge
v_{id}) - \lambda_m (v_{m1} \wedge \cdots \wedge
v_{ij} \wedge \cdots \wedge v_{md})
\rangle \mid 1 \leq j,l \leq d \text{ and } i \neq m
\rangle_\RR,$ where $v_{ml}$ replaces $v_{ij}$ in the first
term and vice versa in the second term; and
\item the real space $\langle \lambda_i (v_{i1} \wedge \cdots \wedge
(\bi v_{ml}) \wedge \cdots \wedge
v_{id}) + \lambda_m (v_{m1} \wedge \cdots \wedge
(\bi v_{ij}) \wedge \cdots \wedge v_{md})
\rangle \mid 1 \leq j,l \leq d \text{ and } i \neq m
\rangle_\RR,$ where $\bi v_{ml}$ replaces $v_{ij}$ in the first
term and vice versa in the second term and where $\bi \in \CC$ is a
square root of $-1.$
\end{enumerate}
\end{lm}

The last summand arises from the infinitesimal unitary transformations
sending $(u_{ij},u_{ml})$ to $(u_{ij}+\bi u_{ml},u_{ml}+\bi u_{ij}).$

\begin{prop} \label{prop:AltContraction}
Let $V$ be a vector space over $K \in \{\RR,\CC\}.$  Let $d \geq 3$ and
let $S \in \Alt_{d+1} V.$ Then $S$ is alternatingly odeco (or udeco) if and
only if for each $v_0 \in V$ the contraction $(S|v_0)$ of $S$ with $v_0$
in the last factor is an alternatingly odeco (or udeco) tensor in $\Alt_d V.$
\end{prop}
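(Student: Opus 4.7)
The easy direction is a direct computation. Writing $S = \sum_{i=1}^k \lambda_i v_{i,1} \wedge \cdots \wedge v_{i,d+1}$ with pairwise orthogonal spans $V_i := \langle v_{i,1}, \ldots, v_{i,d+1}\rangle$, decompose any $v_0 \in V$ as $v_0 = \sum_i u_i + u_0$ with $u_i \in V_i$ and $u_0 \in (\bigoplus_i V_i)^\perp$. Then $(S|v_0) = \sum_i \lambda_i (v_{i,1} \wedge \cdots \wedge v_{i,d+1} | u_i)$, and each summand lies in $\Alt_d V_i$. Because $\dim V_i = d+1$, every nonzero element of $\Alt_d V_i$ is a decomposable alternating $d$-tensor representing a hyperplane of $V_i$; hyperplanes coming from different $V_i$ are pairwise orthogonal, so $(S|v_0)$ is alternatingly odeco (respectively udeco).

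For the converse, the key observation is that $\phi_S : V \to \Alt_d V$, $v \mapsto (S|v)$, is linear and by hypothesis its image lies entirely inside the alternatingly odeco (respectively udeco) variety $X \subseteq \Alt_d V$. Pick $v_0 \in V$ achieving the maximum number $k$ of terms in the orthogonal decomposition of $(S|v_0)$, which is unique by Proposition~\ref{prop:Unique}; write $(S|v_0) = \sum_{i=1}^k \mu_i \omega_i$ with $\omega_i$ representing pairwise orthogonal $d$-subspaces $W_i \subseteq V$. By Lemma~\ref{lm:Tangent} (respectively Lemma~\ref{lm:TangentC}), the tensor $(S|v_0)$ is a smooth point of $X$. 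The crucial strengthening over mere tangency is that the entire affine line $\{(S|v_0) + t(S|v) : t \in K\}$ lies in $X$, so $(S|v)$ belongs to the (generally strictly smaller) cone of directions along which $X$ contains honest lines through $(S|v_0)$.

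A second-order expansion of the Pl\"ucker parameterization then rules out the cross-rotation summand~(3) of the tangent space (and its complex analogue~(4) in Lemma~\ref{lm:TangentC}), since infinitesimal rotations produce curves of strictly positive curvature in $X$ rather than lines; moreover, within the summand $\bigoplus_i \Alt_{d-1}(W_i) \wedge W_0$ (with $W_0 := (\bigoplus_j W_j)^\perp$) only the rank-one elements $\alpha_i \wedge u_i$ survive, and one shows that the direction $u_i \in W_0$ can be chosen independently of $v$. Set $V_i := W_i \oplus \langle u_i\rangle_K$; these are pairwise orthogonal $(d+1)$-dimensional subspaces, since $W_i \perp W_j$ and $u_i \in W_0 \perp W_j$ for $i \neq j$. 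Every contraction $(S|v)$ then lies in $\bigoplus_i \Alt_d V_i$, so $S$ itself lies in $\bigoplus_i \Alt_{d+1} V_i$, and since $\dim V_i = d+1$ each component of $S$ is a scalar multiple of the wedge of any orthonormal basis of $V_i$, producing the required alternating orthogonal decomposition. The principal obstacle is the lines-versus-tangency step: the second-order calculation excluding the cross-rotation directions and pinning down a single $u_i$ per summand is the only real content, and should carry over essentially verbatim from the real to the unitary setting via Lemma~\ref{lm:TangentC}.
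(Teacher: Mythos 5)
Your easy direction and the overall setup of the converse (linearity of $v \mapsto (S|v)$, choosing $v_0$ with the maximal number $k$ of terms, invoking Lemmas~\ref{lm:Tangent} and~\ref{lm:TangentC} to constrain the image of this map inside the tangent space at $(S|v_0)$) match the paper. But the two steps that carry the actual weight of the proof are not established. First, your mechanism for pinning down $S$ is to ``rule out'' the rotation summands (3)--(4) by a second-order/curvature argument. This is not a proof: to exclude a tangent direction as the direction of a line contained in $X$ you must show that \emph{no} curve in $X$ through $(S|v_0)$ with that tangent is affine, not merely that the particular one-parameter rotation curve has nonzero curvature. Worse, the claim is at odds with how the argument actually has to go: the contractions $(S|v)$ of a tensor that is not yet known to be odeco can perfectly well have components in the rotation summands, and the paper's proof does not discard them --- it expands $S-S'$ on a wedge basis adapted to the $W_i$ and runs a combinatorial support analysis on the coefficients $c_I$, in which the rotation summands are used (they are essential in the borderline case $d=3$, $I=\{(1,1),(1,2),(2,1),(2,2)\}$). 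Your assertion that in the summand $\bigoplus_i \Alt_{d-1}(W_i)\wedge W_0$ ``only the rank-one elements $\alpha_i\wedge u_i$ survive, and one shows that $u_i$ can be chosen independently of $v$'' is exactly the conclusion $S=\sum_i \omega_i\wedge w_i$ that needs proving; ``one shows'' is where the proof should be.

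Second, even granting $S=\sum_{i=1}^k \omega_i\wedge w_i$ with $\omega_i$ representing the pairwise orthogonal $d$-spaces $W_i$, you still must prove that the $(d+1)$-spaces $W_i+\langle w_i\rangle$ are pairwise perpendicular. You sidestep this by declaring $u_i\in W_0$, but nothing in the tangent-space analysis forces the $w_i$ to be orthogonal to one another: $W_i+\langle w_i\rangle \perp W_j+\langle w_j\rangle$ requires $w_i\perp w_j$ in addition to $w_i\perp W_j$, and neither is addressed. In the paper this is the final third of the proof and is genuinely nontrivial: one contracts with a generic $z$, uses the hypothesis that $(S|z)$ is odeco with $k$ terms together with the uniqueness of the decomposition of $((S|z)|v_0)$ (Proposition~\ref{prop:Unique}), and an injectivity argument to identify the resulting $d$-spaces with $W_i\cap z^\perp$. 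Without both of these ingredients the argument does not close.
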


\begin{proof}
The ``only if'' direction is immediate: contracting the terms in
an orthogonal decomposition of $S$ with $v_0$ yields an orthogonal
decomposition for $(S|v_0).$ Note that in this process the pairwise
orthogonal $(d+1)$-spaces encoded by $S$ are replaced by their
$d$-dimensional intersections with the hyperplane $v_0^\perp,$ and
discarded if they happen to be contained in that hyperplane.

Conversely, assume that all contractions of $S$ with a vector are
alternatingly odeco.  Among all $v_0 \in V$ choose one, say of norm $1,$
such that $T:=(S|v_0)$ is odeco with the maximal number of terms, say $k,$
and let $\lambda_i$ and the $v_{ij}$ be as in \eqref{eq:altT}. Then $\Psi:
v \mapsto (S|v)$ is a real-linear map from an open neighbourhood of $v_0$
in $V$ into the set $X$ in the lemma, and hence its derivative at $v_0,$
which is the $\Psi$ itself, maps $V$ into the tangent space described in
the lemma. Since contracting with $v_0$ maps $\Alt_{d+1} V$ into $\Alt_d
(v_0^\perp),$ we may choose a basis $v_{00},\ldots,v_{0(n-kd)}$ of $V_0$
from the lemma that starts with $v_{00}:=v_0.$ Now we have
\[ S=\left(\sum_{i=1}^k \lambda_i v_{i1} \wedge \cdots \wedge v_{id}
\wedge v_{00}\right) + S''=:S'+S'' \]
where $(S''|v_{00})=0.$ We have an orthonormal basis $(v_{ij})_{ij}$
of $V$ where $(i,j)$ runs through $A:=([k]\times [d]) \cup (\{0\} \times
[n-kd]),$ where $[k]:=\{1,\ldots,k\}.$ 

For a subset $I \subseteq A$ we write $v_I$ for the vector in
$\Alt_{d+1} V$ obtained as the wedge product of the vectors labelled by
$I$ (in some fixed linear order on $A$). The vectors $v_I$ with $|I|=d+1$
form a $K$-basis of $\Alt_{d+1} V,$ and similarly for those with $|I|=d.$ Now
$(S'|v)$ lies in the tangent space to $X$ at $T$ for all $v$ (indeed,
in the sum of the first two summands in the lemma). Hence also $(S''|v)$
must lie in that tangent space. Expand $S''$ on the chosen basis:
\[ S''=\sum_{I \subseteq A, |I|=d+1} c_I v_I. \]
We claim that $c_I=0$ unless $I$ contains one of the $k$ sets $\{i\}
\times [d].$ Indeed, suppose that $c_I \neq 0$ and that $I$ does not
contain any of these $k$ sets. Contracting $v_I$ with any $v_\alpha$ with
$\alpha \in I$ yields $\pm v_J$ where $J:=I \setminus \{\alpha\}$,
hence $v_J$ appears with a nonzero coefficient
in $(S''|v_\alpha).$  By the lemma we find that $J$
must contain a $(d-1)$-subset of at least one of the sets $\{i\} \times
[d].$ So in particular, there exists an $i$ such that $I$ itself contains
a $(d-1)$-subset of $\{i\} \times [d].$  Suppose first that this $i$
is unique, say equal to $i_0.$ Then contracting $v_I$ with $v_{i_0,j}$
with $(i_0,j) \in I$ yields $\pm v_J$ where $J$ contains only at most $d-2$
of the elements of each of the sets $\{i\} \times [d],$ a contradiction
with the lemma. So this $i$ is not unique. Then $I$ contains $d-1$
elements from each of at least two disjoint sets, so $2(d-1) \leq d+1,$
so $d\leq 3,$ and hence $d=3$---here we use that $d \geq 3.$ Without loss
of generality, then, $I=\{(1,1),(1,2),(2,1),(2,2)\}.$ Now contracting
$v_I$ with $v_{11}$ yields a scalar times $\pm v_{12} \wedge v_{21}
\wedge v_{22},$ hence this term appears in $(S|v_{11}).$ But (see the
last one/two summand/summands in the tangent space for the odeco/udeco
case, respectively) this term can only appear in a tangent vector if
also the term $\pm v_{11} \wedge v_{23} \wedge v_{13}$ appears---which
is impossible after contracting with $v_{11}.$ This proves the claim.

We conclude that $S$ can be written as
\[ S=\sum_{i=1}^k v_{i1} \wedge \cdots \wedge v_{id} \wedge
w_i \]
for suitable vectors $w_i$ satisfying $(w_i|v_{0})=\lambda_i.$ Set
$W_i:=V_i + \langle w_i \rangle.$ We need to show that the spaces
$W_1,\ldots,W_k$ are pairwise perpendicular. For this, it suffices to show
that, for $z$ in an open dense subset of $V,$ the spaces $W_i':=W_i \cap
z^\perp$ are pairwise perpendicular. We choose this open subset such that
\begin{enumerate}
\item the contraction $(S|z)$ has an orthogonal decomposition with $k$
terms;
\item the $k$ spaces $W_i'$ are $d$-dimensional and linearly independent;
\item the tensor $((S|z)|v_{0})=\pm((S|v_0)|z) \in \Alt_{d-1}V,$ which by assumption
is alternatingly odeco, has a unique orthogonal decomposition.
\end{enumerate}
By proposition~\ref{prop:Unique}, the last condition is void if $d>3.$
Now, each $W_i'':=W_i' \cap v_0^\perp$ is contained in $V_i,$ so that $W_i'' \perp
W_m''$ for all $i \neq m.$ Now, by assumption, the tensor
\[ (S|z) \in \bigoplus_{i=1}^k \Alt_d W_i' \]
is alternatingly odeco with $k$ terms. Let $U_1,\ldots,U_k$ be
the $d$-dimensional, pairwise orthogonal spaces encoded by it. Then
$((S|z)|v_{0})$ has an orthogonal decomposition with terms in $\Alt_{d-1}
(U_i \cap v_{0}^\perp).$ But we also have
\[ ((S|z)|v_{0}) \in \bigoplus_{i=1}^k \Alt_{d-1} W_i'', \]
where the $W_i''$ are pairwise perpendicular. So, since we assumed
that this orthogonal decomposition is unique, after a permutation
of the $U_i$ we have $U_i \cap v_{0}^\perp=W_i''.$ Now let
$u_{i1},\ldots,u_{id}$ be an orthonormal basis of $U_i,$ where the first
$(d-1)$ form a basis of $W_i''.$ Extend with $u_{01},\ldots,u_{0(n-kd)}$
to an orthonormal basis of $V.$ Arguing with respect to the basis
$(u_I)_{|I|=d},$ we find that the map $V^k \to
\Alt_d V$ that sends $(y_1,\ldots,y_k)$ to $\sum_{i=1}^k u_{i1} \wedge
\cdots \wedge u_{i(d-1)} \wedge y_i$ is injective. Since
\[ (S|z)=\sum_{i=1}^k \mu_i u_{i1} \wedge \cdots \wedge u_{id}
=\sum_{i=1}^k \mu'_i u_{i1} \wedge \cdots \wedge u_{i(d-1)} \wedge w'_i
\]
for suitable $w'_i \in W'_i$ and nonzero scalars $\mu_i,\mu_i',$ we find that $W_i'=U_i,$ and hence the
$W_i'$ are pairwise perpendicular, as desired.
\end{proof}

\begin{proof}[Proof of the Main Theorem for alternating tensors.]
In Lemmas~\ref{lm:AltOdecoJacobi}, \ref{lm:AltOdecoCasimir}
and Pro\-position~\ref{prop:JacobiCasimirAltOdeco} we found
that an alternating
three-tensor is alternatingly odeco if and only if it satisfies
certain polynomial equations of degrees $2$ and $4.$
Correspon\-ding\-ly,
Proposition~\ref{prop:IdsCasimirAltUdeco}
settles the Main Theorem for alternatingly {\em udeco}
three-tensors. Proposition~\ref{prop:AltContraction} yields that the
pullbacks of the real polynomial equations characterising alternatingly
odeco/udeco $d$-tensors along real-linear maps yield equations
characterising alternatingly odeco/udeco $(d+1)$-tensors. These
pullbacks have the same degrees as the original equations.
\end{proof}

%%%%%%%%%%%%%%%%%%%%%%%%%%%%%%%%%%%%%%%%%%%%%%%%%%%%%%%%%%%%%
\section{Concluding remarks} \label{sec:Concluding}
%%%%%%%%%%%%%%%%%%%%%%%%%%%%%%%%%%%%%%%%%%%%%%%%%%%%%%%%%%%%%

We have established low-degree real-algebraic characterisations of
orthogonally decomposable tensors in six different scenarios. While this
is quite a satisfactory result, at least three questions remain.

First, do the equations that we have found generate the ideals of
the real-algebraic varieties at hand? We are somewhat optimistic
in the ordinary and symmetric odeco case, because of evidence in
\cite{robeva_odeco} for the case of symmetrically odeco $2 \times 2 \times
\cdots \times 2$-tensors. But in general we believe that representation
theory of the orthogonal and unitary groups should be used to approach
this question.

Second, and related to this, our main result can be read as a finiteness
result for an infinite class of varieties in the spirit of Snowden's
Delta-modules \cite{Snowden10}. Can the methods of \cite{Sam15},
tailored to the orthogonal and unitary groups that preserve orthogonally
decomposable tensors, lead to more refined finiteness results on equations
and higher-order syzygies?

Third, a potentially interesting line of research, which we have not yet
pursued further, concerns a form of (non-associative, non-commutative)
{\em elimination}. To make this somewhat precise, suppose that we are
given a number of polynomial identities defining a class of algebras
over $\RR.$ Now consider the functor that assigns to such an algebra
$A$ the space $\CC \otimes_\RR A$ equipped with the {\em semilinear}
extension of the product, and that assigns to an algebra homomorphism the
its {\em linear} extension. What polynomial identities are satisfied by
the image of our class under this functor? Above we have implicitly seen
that commutative, associative $\RR$-algebras are mapped to commutative,
semi-associative $\CC$-algebras. But in the case of real Lie algebras
we do not know a characterisation of the outcome---this is why we needed
more {\em ad hoc} methods for alternatingly udeco three-tensors.

\section*{Acknowledgments} 
AB and JD are supported by a Vidi grant from the from the Netherlands
Organisation for Scientific Research (NWO). EH is supported by an NWO
free competition grant. AB partially supported by MIUR funds, PRIN 2010--2011
project ``Geometria delle variet\`a algebriche'' and
Universit\`a degli Studi di Trieste--FRA 2011 project
``Geometria e topologia delle variet\`a''. AB is member of INdAM-GNSAGA. ER is supported by the UC Berkeley Mathematics Department. We thank Nick Vannieuwenhoven for several
remarks on a previous draft. Finally, we thank the organizers of the Fall 2014 workshop ``Tensors in Computer Science and Geometry'' at the Simons Institute for the Theory of Computing, where this project started in embryo. 

\bibliography{odeco}{}
\bibliographystyle{amsalpha}

\end{document}